\theoremstyle{plain}
\newtheorem{theorem}{Theorem}[section]
\newtheorem{proposition}[theorem]{Proposition}
\newtheorem{lemma}[theorem]{Lemma}
\newtheorem{corollary}[theorem]{Corollary}
\newtheorem{fact}[theorem]{Fact}
\theoremstyle{definition}
\newtheorem{definition}[theorem]{Definition}
\newtheorem{example}[theorem]{Example}
\newtheorem{remark}[theorem]{Remark}
\numberwithin{equation}{section}
\newcommand{\UP}{\blacktriangle}                              
\newcommand{\DOWN}{\blacktriangledown}                        
\begin{document}

\title[Representing pseudocomplemented Kleene algebras by tolerances]{Representing regular pseudocomplemented Kleene algebras by tolerance-based rough sets}

\author[J.~J{\"a}rvinen]{Jouni J{\"a}rvinen}
\address{J.~J{\"a}rvinen, Sirkankuja 1, 20810~Turku, Finland}

\author[S.~Radeleczki]{S{\'a}ndor Radeleczki}
\address{S.~Radeleczki, Institute of Mathematics\\ 
University of Miskolc\\3515~Miskolc-Egyetemv{\'a}ros\\Hungary}

\keywords{Kleene algebra, algebraic lattice, pseudocomplemented lattice, regular double $p$-algebra, rough set, tolerance relation, irredundant covering}

\begin{abstract} 
We show that any regular pseudocomplemented Kleene algebra defined on an algebraic lattice is isomorphic to a rough set Kleene algebra 
determined by a tolerance induced by an irredundant covering. \\

{\emph{2010 Mathematics Subject Classification.} Primary: 06B15; Secondary: 06D15, 06D30, 68T37, 06D20.}
\end{abstract}

\maketitle

\section{Introduction}

Kleene algebras were introduced by D.~Brignole and A.~Monteiro in \cite{BrigMont1967}. Earlier J.~A.~Kalman  \cite{Kalman58}
called such distributive lattices with an involution $\sim$ satisfying $x \wedge {\sim} x \leq y \vee {\sim} y$
as ``normal $i$-lattices''. Kleene algebras can be seen as generalisations of, for instance, Boolean, {\L}ukasiewicz, 
Nelson, and Post algebras; see \cite{BaDw74}. The notion used here should not to be confused with the other Kleene algebra 
notion generalizing regular expressions.

Clearly, any pseudocomplemented Kleene algebra is defined on a distributive double pseudocomplemented lattice. 
According to \cite{Sankappanavar86} a pseudocomplemented Kleene algebra $(L,\vee,\wedge,{\sim},{^*},0,1)$
is congruence-regular if and only if the distributive double $p$-algebra $(L,\vee,\wedge,{^*}, {^+}, 0,1)$ is
congruence-regular. Varlet \cite{Varlet1972} has shown that any double $p$-algebra 
is congruence-regular if and only it is determination-trivial, that is, $x^* = y^*$ and $x^+ = y^+$ imply $x = y$.
Therefore, a pseudocomplemented Kleene algebra is congruence-regular if and only if it is determination-trivial.
In this paper we study the representation of (congruence-)regular pseudocomplemented 
Kleene algebras  whose underlying lattice is algebraic.

It is well known that any Boolean algebra defined on an algebraic lattice is isomorphic to the powerset algebra $\wp(U)$ of some set $U$.  
In this paper we prove an analogous result for a regular pseudocomplemented Kleene algebra and the algebra of rough sets defined by a tolerance 
induced by an irredundant covering of a set.

Rough sets were introduced by Z.~Pawlak in \cite{Pawl82}. In rough set theory it is assumed that our knowledge
about a universe of discourse $U$ is given in terms of a binary relation reflecting the distinguishability or 
indistinguishability of the elements of $U$. According to Pawlak's original definition, the knowledge is given 
by an equivalence $E$ on $U$ interpreted so that two elements of $U$ are $E$-related if they cannot 
be distinguished by their properties known by us. Nowadays in the literature numerous studies can be found  
in which approximations are determined by other types of relations.

If $R$ is a given binary relation on $U$, then for any 
subset $X \subseteq U$,  the  \emph{lower approximation} of $X$ is defined as
\[
 X^\DOWN = \{x \in U \mid R(x) \subseteq X \}
\]
and the \emph{upper approximation of $X$} is 
\[
X^\UP = \{x \in U \mid R(x) \cap X \neq \emptyset \},
\] 
where $R(x) = \{ y \in U \mid x \, R \, y \}$. The set $X^\DOWN$ may be interpreted as the set of objects
that certainly are in $X$ in view of the knowledge $R$, because if $x \in X^\DOWN$, then
all elements to which $x$ is $R$-related are in $X$. Similarly, the set $X^\UP$ may be considered as the
set of elements that are possibly in $X$, since $x \in X^\UP$ means that there exists at least one element
in $X$ to which $x$ is $R$-related. Note that the maps $^\DOWN$ and $^\UP$ are dual, that is, 
$X^{\DOWN c} = X^{c \UP}$ and $X^{\UP c} = X^{\DOWN c}$ for all $X \subseteq U$, where
$X^c$ denotes the complement $U \setminus X$ of $X$.

The \emph{rough set} of $X$ 
is the pair $(X^\DOWN,X^\UP)$, and the set of all 
rough sets is
\[
 \mathit{RS} = \{ (X^\DOWN, X^\UP)  \mid X \subseteq U \}. 
\]
The set $\mathit{RS}$  may be canonically ordered by the
coordinatewise order: 
\[
(X^\DOWN,X^\UP) \leq (Y^\DOWN,Y^\UP) \iff X^\DOWN \subseteq Y^\DOWN \mbox{ \
and \ } X^\UP \subseteq Y^\UP. 
\]
The structure of $RS$ is well studied in the case when $R$ is an equivalence;
see \cite{Com93, duentsch97, GeWa92, Iturrioz99, Jarv07, Pagliani97, PomPom88}. 
In particular, J.~Pomyka{\l }a and J.~A.~Pomyka{\l }a showed in \cite{PomPom88} that 
$RS$ is a Stone lattice. Later this result was improved by S.~D.~Comer \cite{Com93} 
by showing that $RS$ is a regular double Stone algebra.
In  \cite{GeWa92}, M.~Gehrke and E.~Walker proved that $RS$ is isomorphic to 
$\mathbf{2}^I \times \mathbf{3}^K$, where $I$ is the set of singleton 
$R$-classes and $K$ is the set of non-singleton equivalence classes of $R$. 

If $R$ is a quasiorder (i.e., a reflexive and transitive binary relation), then $\mathit{RS}$ is
a completely distributive and algebraic lattice \cite{JRV09}. We showed in \cite{JarRad11} how 
one can define a Nelson algebra $\mathbb{RS}$ on this algebraic lattice. In addition, we proved that if
$\mathbb{L} = (L,\vee,\wedge,{\sim},\to,0,1)$ is a Nelson algebra defined on an algebraic lattice, then there 
exists a set $U$  and a quasiorder $R$ on $U$ such that $\mathbb{L}$ is isomorphic to the rough set Nelson algebra 
$\mathbb{RS}$ determined by $R$.

Let $R$ be a tolerance on $U$, that is, $R$ is a reflexive and symmetric binary relation on $U$.
The pair $({^\DOWN}, {^\UP})$ of rough approximation operations forms an order-preserving Galois connection
on the powerset lattice $(\wp(U),\subseteq)$ of $U$, that is, $X^\UP \subseteq Y \Leftrightarrow X ¸\subseteq Y^\DOWN$
for any $X,Y \subseteq U$. The essential facts about Galois connections can be found in 
\cite{ErKoMeSt93}, for instance. Because $R$ is reflexive, we have also $X^\DOWN \subseteq X \subseteq X^\UP$
for all $X \subseteq U$.  Properties of rough approximations defined by tolerances are given in \cite{Jarv99,JarRad14},
and they are not recalled here.

It is known that if $R$ is a tolerance, then $\mathit{RS}$ is not necessarily even a lattice \cite{Jarv99}. However,
we proved in \cite{JarRad14} that if $R$ is a tolerance induced by an irredundant covering of $U$, then 
$(\mathit{RS},\vee,\wedge, {\sim}, (\emptyset,\emptyset), (U,U))$ is a Kleene algebra such that 
$\mathit{RS}$ is algebraic and completely distributive. This means that $\mathit{RS}$ forms a double 
pseudocomplemented lattice. Our main result shows that if $\mathbb{L} = (L,\vee,\wedge,{\sim},{^*}, 0,1)$ is a regular pseudocomplemented Kleene
algebra defined on an algebraic lattice, then there exists a set $U$ and a tolerance $R$ induced by an irredundant
covering of $U$ such that $\mathbb{L}$ is isomorphic to the rough set pseudocomplemented Kleene algebra 
$\mathbb{RS} = (\mathit{RS},\vee,\wedge,{\sim}, {^*}, (\emptyset,\emptyset), (U,U))$  determined by $R$.

The paper is structured as follows. In the next section we recall some notions and facts related to De~Morgan, Kleene,
Heyting, and double pseudocomplemented algebras. In Section~\ref{Sec:RoughSets} we study rough sets defined by tolerances
induced by irredundant coverings. In particular, the structure of their completely join-irreducible
elements is given. Varlet \cite{Varlet1972} has proved that any distributive double pseudocomplemented lattice is regular
if and only if any chain of its prime filters has at most two elements. In Section~\ref{Sec:Regularity} we first show that 
if $\mathbb{L} = (L,\vee,\wedge,{\sim},{^*}, 0,1)$ is a pseudocomplemented De~Morgan algebra defined on an algebraic lattice, 
then $\mathbb{L}$ is regular if and only if the set of completely join-irreducible elements of $L$ has at most two levels. 
In the end of the section we consider irredundant coverings and their tolerances determined by regular pseudocomplemented Kleene 
algebras defined on algebraic lattices. Section~\ref{Sec:Representation} contains our representation theorem and its proof.
The construction is also illustrated by an example. Finally, Section~\ref{Sec:Remarks} contains some concluding remarks.

\section{Preliminaries} \label{Sec:Preliminaries}

In this section we recall some general lattice-theoretical notions and notation which can be found, for instance, in the books 
\cite{BaDw74, DaPr02, Gratzer}. For more specific results, a reference will be given.

An element $j$ of a complete lattice $L$ is called \emph{completely join-irreducible} if $j = \bigvee S$
implies $j \in S$ for every subset $S$ of $L$. Note that the least element $0$ of $L$ is not completely
join-irreducible. The set of completely join-irreducible elements of $L$ is denoted by 
$\mathcal{J}(L)$, or simply by $\mathcal{J}$ if there is no danger of confusion.
A complete lattice $L$ is \emph{spatial} if for each
$x \in L$,
\[ x = \bigvee \{ j \in \mathcal{J} \mid j \leq x \}. \]
An element $x$ of a complete lattice $L$ is said to be \emph{compact} if, for every $S \subseteq L$, 
\[ x \leq \bigvee S \Longrightarrow x \leq \bigvee F \text{ for some finite subset $F$ of $S$}. \]
Let us denote by $\mathcal{K}(L)$ the set of compact elements of $L$.
A complete lattice $L$ is said to be \emph{algebraic} if for each $a \in L$,
\[ a  = \bigvee \{ x \in \mathcal{K}(L) \mid x \leq a\}.\]  

Note that if $L$ is an algebraic lattice, then its completely join-irreducible elements are compact. 
Let the lattice $L$ be both algebraic and spatial. Since any compact element can be written as a 
finite join and any finite join  of compact elements is compact, the compact elements of $L$ are exactly 
those that can be written as a finite join of completely join-irreducible elements.

A complete lattice $L$ is \emph{completely distributive} if for any doubly indexed
subset $\{x_{i,\,j}\}_{i \in I, \, j \in J}$ of $L$, we have
\[
\bigwedge_{i \in I} \Big ( \bigvee_{j \in J} x_{i,\,j} \Big ) = 
\bigvee_{ f \colon I \to J} \Big ( \bigwedge_{i \in I} x_{i, \, f(i) } \Big ), \]
that is, any meet of joins may be converted into the join of all
possible elements obtained by taking the meet over $i \in I$ of
elements $x_{i,\,k}$\/, where $k$ depends on $i$.

A \emph{complete ring of sets} is a family $\mathcal{F}$ of sets such that $\bigcup \mathcal{S}$ and 
$\bigcap \mathcal{S}$ belong to $\mathcal{F}$ for any subfamily $\mathcal{S} \subseteq \mathcal{F}$.
\begin{remark}\label{Rem:RingOfSets}
Let $L$ be a lattice. Then the following are equivalent:
\begin{enumerate}[\rm (a)]
\item $L$ is isomorphic to a complete ring of sets;
\item $L$ is algebraic and completely distributive;
\item $L$ is distributive and doubly algebraic (i.e.\@ both $L$ and the dual $L^\partial$ of $L$ are algebraic);
\item $L$ is algebraic, distributive and spatial.
\end{enumerate}
\end{remark}

A \emph{De Morgan algebra} is an algebra $\mathbb{L} =(L,\vee,\wedge,\sim,0,1)$
of type $(2,2,1,0,0)$ such that $(L,\vee,\wedge, 0, 1)$ is a bounded distributive
lattice and $\sim$ satisfies for all $x,y\in L$,
\begin{center}
${\sim}{\sim} x =x$  \quad and \quad  $x\leq y \iff {\sim} y \leq {\sim} x$.
\end{center}
This definition means that $\sim$ is an isomorphism between the lattice $L$ and its dual $L^\partial$.
Thus for a De~Morgan algebra $\mathbb{L}$, the underlying lattice $L$ is self-dual, and for each $x,y \in L$,
\[
{\sim}(x \vee y) = {\sim} x \wedge {\sim} y \text{\quad and\quad } {\sim} (x \wedge y) = {\sim} x \vee {\sim} y.
\]

We say that a De Morgan algebra is \emph{completely distributive} if its underlying lattice is completely distributive. 
Let $\mathbb{L}$ be a completely distributive De Morgan algebra. We define for any $j \in \mathcal{J}$ the element
\begin{equation}\label{Eq:Gee}
 g(j)= \bigwedge \{x \in L \mid x\nleq {\sim} j \}.
\end{equation}
This $g(j) \in \mathcal{J}$ is the least element which is not below ${\sim}j$. The function $g \colon \mathcal{J} \to \mathcal{J}$
satisfies the conditions:
\begin{enumerate}[({J}1)]
 \item if $x \leq y$, then $g(x) \geq g(y)$;
 \item $g(g(x))= x$.
\end{enumerate}
In fact, $(\mathcal{J},\leq)$ is self-dual by the map $g$.

Let $\mathbb{L}$ be a De~Morgan algebra defined on an algebraic lattice. The underlying lattice $L$ is doubly algebraic, because it is self-dual.
Therefore, the lattice $L$ has all equivalent properties (a)--(d) of Remark~\ref{Rem:RingOfSets}. We also have that the operation ${\sim}$ is
expressed in terms of $g$ by:
\begin{equation} \label{Eq:Connection}
 {\sim} x= \bigvee \{j \in \mathcal{J} \mid g(j) \nleq x \}. 
\end{equation}
For studies on the properties of the map $g$, see  \cite{Cign86, JarRad11, Mont63a}, for example.

\medskip%

A \emph{Kleene algebra} is a De Morgan algebra $\mathbb{L}$ satisfying 
\begin{equation} \label{Eq:Kleene}\tag{K}
x \wedge {\sim} x \leq y \vee {\sim} y
\end{equation}
for each $x,y \in L$. It is proved in \cite{CigGal81} that for any Kleene algebra $\mathbb{L}$ and  $x,y \in L$, 
\begin{equation}\label{Eq:KleeneComplements}
 x \wedge y = 0 \text{\quad implies \quad } y \leq {\sim} x.
\end{equation}
If $\mathbb{L}$ is a completely distributive Kleene algebra, then $j$ and $g(j)$ are comparable for any $j \in \mathcal{J}$, that is,
\begin{enumerate}[({J}3)]
 \item [({J}3)] $g(j)\leq j \text{ or } j \leq g(j)$.
\end{enumerate}

A \emph{Heyting algebra} is a bounded lattice $L$ such that for all $a,b\in L$, there is a greatest element $x$ of $L$ satisfying
$a \wedge x \leq b$. This element $x$ is called the \emph{relative pseudocomplement}  of $a$ with respect to $b$, and it is denoted by 
$a \Rightarrow b$. Heyting algebras are not only distributive, but they satisfy the \emph{join-infinite distributive law} (JID), that is, 
if $\bigvee S$ exists for some $S \subseteq L$, then for each $x \in L$, $\bigvee \{x \wedge y \mid a \in S\}$ exists and 
$x \wedge ( \bigvee S ) = \bigvee \{ x \wedge y \mid y \in S \}$. Conversely, any complete lattice satisfying (JID) is a  Heyting algebra, with
$ a \Rightarrow b = \bigvee \{c \mid a \wedge c \le b\}$. 

A \emph{double Heyting algebra} $L$ is such that both $L$ and its dual $L^\partial$ are Heyting algebras; see \cite{Kat73}, for instance. 
This means that in $L$ there are two implications $\Rightarrow$ and $\Leftarrow$ which are fully determined by 
$\leq$, and $\Leftarrow$ satisfies $a \vee x \geq b$ if and only if $x \geq a \Leftarrow b$ for all $a,b,x \in L$.
These structures are also called \emph{Heyting--Brouwer algebras}.

A Heyting algebra $L$ such that $(L, \vee, \wedge, {\sim}, 0,1)$ is a De~Morgan algebra is called a \emph{symmetric Heyting algebra}; see \cite{Monteiro1980}.
Each symmetric Heyting algebra defines a double Heyting algebra such that $a \Leftarrow b$ equals ${\sim} ({\sim} a \Rightarrow {\sim} b)$.

\begin{example}\label{Exa:DoubleHeyting}
(a) Every De~Morgan algebra defined on an algebraic lattice determines a symmetric Heyting algebra by Remark~\ref{Rem:RingOfSets}.

(b) A complete lattice is a double Heyting algebra if and only if it satisfies (JID) and (MID), where (MID) is the dual condition of (JID). 
In particular, every finite distributive lattice is a double Heyting algebra. Of course, not every finite and distributive lattice is self-dual, 
that is, a symmetric Heyting algebra.

(c) One double Heyting algebra may define several symmetric Heyting algebras. For instance, the Boolean algebra $\mathbf{2^2}$
with $0 < a,b < 1$ is a double Heyting algebra and we can define a De~Morgan operation $\sim$ in $\mathbf{2^2}$ by two ways: either (i) $a \mapsto a$ and $b \mapsto b$;
or (ii) $a \mapsto b$ and $b \mapsto a$. These mappings are De~Morgan operations when for both cases we set ${\sim} 0 = 1$ and ${\sim} 1 = 0$.
\end{example}

In a lattice $L$ with the least element $0$, an element $x \in L$ is said to have a \emph{pseudocomplement} if there exists an
element $x^*$ in $L$ having the property that for any $z\in L$, $x\wedge z=0 \Leftrightarrow z\leq x^*$.
The lattice $L$ itself is called \emph{pseudocomplemented}, if every element of $L$ has a pseudocomplement. Every pseudocomplemented lattice is necessarily bounded, 
having $0^*$ as the greatest element. The algebra $(L, \vee, \wedge,{^*},0,1)$ is called also a \emph{$p$-algebra} for short. The following hold for 
every $a,b \in L$:
\begin{enumerate}[\rm (i)]
 \item $a \leq b$ implies $b^* \leq a^*$;
 \item the map $a \mapsto a^{**}$ is a closure operator;
 \item $a^* = a^{***}$;
 \item $(a \vee b)^* = a^* \wedge b^*$;
 \item $(a \wedge b)^* \geq a^* \vee b^*$.
\end{enumerate}

An algebra $(L,\vee,\wedge,{^*},{^+},0,1)$ is called a \emph{double $p$-algebra} if $(L,\vee,\wedge,{^*},0,1)$ is a $p$-algebra and $(L,\vee, \wedge,{^+},0,1)$ 
is a dual $p$-algebra (that is, $z \geq x^+ \Leftrightarrow x \vee z = 1$ for all $x,y \in L$). In the literature, the term \emph{double pseudocomplemented lattice} is
often used instead of double $p$-algebra. Each Heyting algebra $L$ defines a distributive $p$-algebra 
by setting $x^* := x \Rightarrow 0$, and if $L$ is also a double Heyting algebra,  it determines a distributive double $p$-algebra, where $x^+ := x \Leftarrow 1$.

For a double $p$-algebra $(L,\vee,\wedge,{^*},{^+},0,1)$, the \emph{determination congruence} $\Phi$ is defined by
\[
\Phi := \{(x, y) \mid x^* = y^* \mbox{ and  } x^+ = y^+\}.
\]
A double $p$-algebra is called \emph{determination-trivial} if $\Phi = \{ (x,x) \mid x \in L\}$. This is obviously equivalent
to the fact that the double $p$-algebra satisfies the condition:
\begin{equation}
x^* = y^* \text{ and } x^+ = y^+ \text{ imply } x=y. \tag{M}
\end{equation}
An algebra is called \emph{congruence-regular} if every congruence is determined by any class of it: 
two congruences are necessarily equal when they have a class in common. J.~Varlet has proved in  \cite{Varlet1972} that double $p$-algebras satisfying (M) 
are exactly the congruence-regular ones. In addition, T.~Katri\u{n}\'{a}k \cite{Kat73} has shown that any 
congruence-regular double pseudocomplemented lattice forms a double Heyting algebra such that
\begin{align}
a \Rightarrow b &= (a^* \vee   b^{**})^{**} \wedge   [(a \vee a^*)^+ \vee a^* \vee b \vee b^* ] ; \label{Eq:RegularHeyting1}\\
a \Leftarrow  b &= (a^+ \wedge b^{++})^{++} \vee [(a \wedge  a^+)^* \wedge a^+ \wedge b \wedge b^+ ].  \label{Eq:RegularHeyting2}
\end{align} 

A \emph{pseudocomplemented De Morgan algebra} is an algebra $(L,\vee,\wedge,{\sim}, {^*}, 0,1)$ such that
$(L,\vee,\wedge,{\sim}, 0,1)$ is a De~Morgan algebra and $(L,\vee,\wedge, {^*}, 0,1)$ is a $p$-algebra. 
In fact, such an algebra forms a double $p$-algebra, where the pseudocomplement operations
determine each other by:
\begin{equation}\label{Eq:Pseudocomplements}
{\sim} x^* = ({\sim} x)^+ \text{ \ and \ } {\sim} x^+ = ({\sim} x)^*.
\end{equation}
H.~P.~Sankappanavar has proved in \cite{Sankappanavar86} that a pseudocomplemented De Morgan algebra satisfying (M)
truly is a congruence-regular pseudocomplemented De Morgan algebra. Therefore, in the sequel we may call pseudocomplemented 
De Morgan and Kleene algebras \emph{regular} when they satisfy (M).
Note that regular pseudocomplemented De~Morgan algebras define symmetric (double) Heyting algebras, 
where the operations $\Rightarrow$ and $\Leftarrow$ are given by \eqref{Eq:RegularHeyting1} and \eqref{Eq:RegularHeyting2}. 

A pseudocomplemented De~Morgan algebra $\mathbb{L}$ is \emph{normal} (see \cite{Monteiro1980}), if for all $x \in L$, 
\begin{equation}
x^* \leq {\sim} x . \tag{N}
\end{equation}
Note that if $\mathbb{L}$ is normal, then for every $x \in L$ and $y = {\sim} x$, we have ${\sim}({\sim} y)^+ = y^* \leq {\sim} y$. 
Hence $(\sim y)^+ \geq y$ and so $x^+ \geq {\sim} x$. Thus
\[
x^* \leq {\sim} x \leq x^+.
\]
It is known (see e.g. \cite{Kat73}) that in any distributive double $p$-algebra, the ``regularity condition'' (M) is equivalent 
to the condition
\begin{equation}
x \wedge x^+ \leq y \vee y^*. \tag{D}
\end{equation}
This means that if $\mathbb{L}$ is a normal and regular pseudocomplemented De~Morgan algebra, then for any $x,y \in L$,
\[
 x \wedge {\sim} x \leq x \wedge x^+ \leq y \vee y^* \leq y \vee {\sim y}.
\]
Therefore, any normal and regular pseudocomplemented De~Morgan algebra forms a Kleene algebra. On the other hand, any pseudocomplemented 
Kleene algebra is normal by \eqref{Eq:KleeneComplements}. Hence we obtain:

\begin{remark} \label{Rem:RegularNormal}
Any regular pseudocomplemented De~Morgan algebra is a Kleene algebra if and only if it is normal.
\end{remark}

A filter $F$ of a lattice $L$ is called \emph{proper}, if $F \ne L$. A proper filter $F$ is a \emph{prime filter} if $a \vee b \in F$ implies 
$a \in F$ or $b \in F$. The set of prime filters of $L$ is denoted by $\mathcal{F}_p(L)$, or by $\mathcal{F}_p$ if there is no danger of confusion. 
Proper ideals and prime ideals are defined analogously. Clearly, $F$ is a prime filter if and only if $L \setminus F$ is a prime ideal.

If $L$ is a bounded distributive lattice, then any prime filter $F$ is contained in a maximal prime filter. 
Moreover, any maximal prime filter is a maximal proper filter.
If $L$ is a distributive lattice, then the principal filter $[j) = \{x \in L \mid x \geq j\}$ of each $j \in \mathcal{J}$ is prime. 
For any $j \in \mathcal{J}$, the prime filter $[j)$ is maximal if and only if $j$ is an atom. 

\begin{fact}\label{Fact:TwoLevel}
For any bounded distributive lattice, the following are equivalent:
\begin{enumerate}[\rm (a)]
 \item There is no 3-element chain of prime filters;
 \item For any $P, Q \in \mathcal{F}_p$ , $P \subset Q$ implies that $Q$ is a maximal filter.
\end{enumerate}
\end{fact}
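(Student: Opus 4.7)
My plan is to prove the two implications directly, leveraging the two facts about bounded distributive lattices stated just before the statement: every prime filter is contained in a maximal prime filter, and every maximal prime filter is in fact a maximal proper filter.

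For (a) $\Rightarrow$ (b), I would fix prime filters $P \subset Q$ and extend $Q$ to a maximal prime filter $M$ using the first recalled fact. If $Q = M$, we are done, since $M$ is also a maximal (proper) filter. Otherwise $Q \subsetneq M$, and chaining with $P \subset Q$ yields a 3-element chain $P \subset Q \subset M$ of prime filters, contradicting (a). Therefore $Q$ must equal $M$, hence $Q$ is maximal.

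For (b) $\Rightarrow$ (a), I would argue by contraposition: suppose there is a 3-element chain $P_1 \subset P_2 \subset P_3$ of prime filters. Applying (b) to the inclusion $P_1 \subset P_2$ forces $P_2$ to be maximal. But $P_2 \subsetneq P_3$ with $P_3$ a proper filter contradicts the maximality of $P_2$. Hence no such chain can exist.

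I do not expect any real obstacle here; the statement is essentially a reformulation, and the only non-trivial ingredients are the two background facts about prime filters in bounded distributive lattices, both of which are already recalled in the excerpt. The only care needed is to keep the distinction between ``maximal prime filter'' and ``maximal (proper) filter'' straight, but this is precisely handled by the remark that in a bounded distributive lattice the two notions coincide.
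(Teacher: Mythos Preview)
Your argument is correct. The paper states this as a Fact without supplying any proof, so there is no approach to compare against; your direct two-way argument, using exactly the two background facts recalled just before the statement, is the natural justification.
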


\begin{proposition}[\cite{Varlet1972}]\label{Prop:Varlet}
Let $(L, \vee, \wedge, {^*}, {^+}, 0, 1)$ be a distributive double $p$-algebra. The following are equivalent:
\begin{enumerate}[\rm (i)]
 \item $L$ is regular;
 \item Any chain of prime filters (or ideals) of $L$ has at most two elements.
\end{enumerate}
\end{proposition}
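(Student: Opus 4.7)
The plan is to reduce, via Katri\u{n}\'{a}k's equivalence (M) $\Leftrightarrow$ (D) recalled just before Remark~\ref{Rem:RegularNormal}, to showing that (D) holds if and only if no chain of prime filters of $L$ has three elements (the prime-ideal version then follows by the usual order-reversing bijection $P \mapsto L \setminus P$).

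For (i) $\Rightarrow$ (ii), assume (D) and suppose, toward a contradiction, that $P_1 \subsetneq P_2 \subsetneq P_3$ is a chain of prime filters. Pick $a \in P_3 \setminus P_2$ and $b \in P_2 \setminus P_1$. Since $a \vee a^+ = 1 \in P_2$ and $a \notin P_2$, primeness gives $a^+ \in P_2 \subseteq P_3$, hence $a \wedge a^+ \in P_3$; the same reasoning yields $b^+ \in P_1 \subseteq P_2$ and therefore $b \wedge b^+ \in P_2$. Instantiating (D) as $b \wedge b^+ \leq a \vee a^*$ puts $a \vee a^* \in P_2$, and primeness together with $a \notin P_2$ forces $a^* \in P_2 \subseteq P_3$; but now $a, a^* \in P_3$ gives $0 = a \wedge a^* \in P_3$, contradicting properness.

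For (ii) $\Rightarrow$ (i) I would argue the contrapositive. Suppose (D) fails, so $x \wedge x^+ \nleq y \vee y^*$ for some $x, y \in L$. Prime filter separation in the bounded distributive lattice $L$ yields a prime filter $P$ with $x \wedge x^+ \in P$ and $y \vee y^* \notin P$, i.e.\ $x, x^+ \in P$ and $y, y^* \notin P$; the aim is to sandwich $P$ strictly between two further prime filters. To obtain $R \supsetneq P$, enlarge $P$ by $y$: the filter generated by $P \cup \{y\}$ is proper, since otherwise some $p \in P$ would satisfy $p \wedge y = 0$, whence $p \leq y^*$ and $y^* \in P$---a contradiction; a prime extension gives $R$, and $y \in R \setminus P$. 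To obtain $Q \subsetneq P$, dualize: the ideal generated by $(L \setminus P) \cup \{x\}$ is proper, because $w \vee x = 1$ with $w \in L \setminus P$ would give $w \geq x^+$ and hence $w \in P$ (as $x^+ \in P$), a contradiction; extending to a prime ideal and complementing yields a prime filter $Q \subseteq P$ with $x \notin Q$, so $Q \subsetneq P$. The chain $Q \subsetneq P \subsetneq R$ contradicts (ii).

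The main obstacle is the bookkeeping in the converse direction: all four conditions $x, x^+ \in P$ and $y, y^* \notin P$ must be used, with $(y, y^*)$ producing the upper prime filter via the ordinary pseudocomplement $^*$ and $(x, x^+)$ producing the lower one via the dual pseudocomplement $^+$. Once this symmetry is recognised, each extension step is a routine application of the prime-filter/prime-ideal theorem for distributive lattices, and the forward direction reduces to a short primeness computation driven by (D).
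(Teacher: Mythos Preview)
Your argument is correct. The forward direction is a clean primeness computation driven by $a\vee a^+=1$ and $a\wedge a^*=0$, and in the converse direction your two extension steps (upward via the filter generated by $P\cup\{y\}$, downward via the ideal generated by $(L\setminus P)\cup\{x\}$) are exactly right; the properness checks use precisely the hypotheses $y^*\notin P$ and $x^+\in P$ that come from the failure of~(D).

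There is nothing to compare against, however: the paper does not prove Proposition~\ref{Prop:Varlet} but merely quotes it from Varlet~\cite{Varlet1972}. Your write-up therefore supplies a self-contained proof where the paper relies on the literature. One minor remark: the observation $a\wedge a^+\in P_3$ in the forward direction is never used (only $b\wedge b^+\in P_2$ is needed to fire~(D)), so you could drop that clause.
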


\section{Rough sets defined by tolerances induced by irredundant coverings}  \label{Sec:RoughSets}

Let $R$ be a tolerance on $U$. A nonempty subset $X$ of $U$ is a \emph{preblock} if $X^2 \subseteq R$. 
A \emph{block} is a maximal preblock, that is, a preblock $B$ is a block if $B \subseteq X$ implies $B = X$ for any preblock $X$. 
Thus any subset $\emptyset \neq X \subseteq U$ is a preblock if and only if it is contained in some block.
Each tolerance $R$ is completely determined by its blocks, that is, $a \, R \, b$ if and only if there exists a block 
$B$ such that $a,b \in B$. 

A collection $\mathcal{H}$ of nonempty subsets of $U$ is called a \emph{covering} of $U$ if $\bigcup \mathcal{H} = U$.
A covering $\mathcal{H}$ is \emph{irredundant} if $\mathcal{H} \setminus \{X\}$ is not a covering for any $X \in \mathcal{H}$.
Each covering $\mathcal{H}$ defines a tolerance $R_\mathcal{H} = \bigcup \{ X^2 \mid X \in \mathcal{H}\}$, 
called the \emph{tolerance induced} by $\mathcal{H}$. Obviously, the sets in $\mathcal{H}$ are preblocks of $R_\mathcal{H}$ and
$R_\mathcal{H}(x) = \bigcup \{ B \in \mathcal{H} \mid x \in B \}$.  
Thus $x \in B$ implies $B \subseteq R_\mathcal{H}(x)$ for any $B \in \mathcal{H}$. 

We proved in \cite{JarRad14} that $\mathcal{H}$ is irredundant if and only if $\mathcal{H} \subseteq \{R_\mathcal{H}(x) \mid x \in U\}$. In addition,
if $\mathcal{H}$ is irredundant, then $\mathcal{H}$ consists of blocks of $R_\mathcal{H}$ \cite{JarRad15}. 
We can now write the following lemma which states that each irredundant covering $\mathcal{H}$ consists of such 
$R_\mathcal{H}(x)$-sets that are blocks of $R_\mathcal{H}$. Therefore, we may simply speak about tolerances induced by an irredundant covering 
without specifying the covering in question.

\begin{lemma}\label{Lem:irreducibleCovering}
Let $R$ be a tolerance induced by an irredundant covering $\mathcal{H}$ of $U$. 
Then $\mathcal{H} = \{ R(x) \mid \text{$R(x)$ is a block} \}$.
\end{lemma}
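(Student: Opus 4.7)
The plan is to prove the stated equality by two inclusions, leaning entirely on the two facts from prior papers recalled in the paragraph just before the lemma: first, that $\mathcal{H}$ is irredundant iff $\mathcal{H} \subseteq \{R(x) \mid x \in U\}$, and second, that in the irredundant case every member of $\mathcal{H}$ is a block of $R = R_\mathcal{H}$. No extra lattice theory is needed; the argument is a short structural bookkeeping.

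For the forward inclusion $\mathcal{H} \subseteq \{R(x) \mid R(x) \text{ is a block}\}$, I would take an arbitrary $B \in \mathcal{H}$. By the first recalled fact there exists $x \in U$ with $B = R(x)$, and by the second recalled fact $B$ is a block. Combining these two statements for the \emph{same} $B$ immediately gives that $R(x)$ is a block, placing $B = R(x)$ in the right-hand set.

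For the reverse inclusion $\{R(x) \mid R(x) \text{ is a block}\} \subseteq \mathcal{H}$, I would fix $x \in U$ such that $R(x)$ is a block. Because $\mathcal{H}$ covers $U$ there is some $B \in \mathcal{H}$ with $x \in B$, and the paragraph before the lemma already records that $x \in B$ implies $B \subseteq R(x)$. Now $B$ is a block by the second recalled fact, hence a maximal preblock; since $R(x)$ is itself a preblock (being a block), the maximality of $B$ yields $B = R(x)$, so $R(x) \in \mathcal{H}$ as required.

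The only subtle choice in the argument is in the second inclusion: one must invoke the maximality of $B$, not of $R(x)$, because what we have is an inclusion $B \subseteq R(x)$ and we need equality from above. That aside, the lemma is a direct corollary of the two imported results, and I do not anticipate any real obstacle beyond this bookkeeping.
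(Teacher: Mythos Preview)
Your proof is correct and matches the paper's own argument essentially line for line: both handle the forward inclusion by the two recalled facts, and both handle the reverse inclusion by picking $B \in \mathcal{H}$ containing $x$, noting $B \subseteq R(x)$, and concluding $B = R(x)$ from the fact that both are blocks. The paper compresses the last step to ``because both $R(x)$ and $B$ are blocks, $B = R(x)$'', whereas you spell out that it is the maximality of $B$ (against the preblock $R(x)$) that is being used, but this is the same reasoning.
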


\begin{proof}
By the above, $\mathcal{H} \subseteq  \{ R(x) \mid \text{$R(x)$ is a block} \}$. On the other hand, suppose that $R(x)$ is a block.
Because $\mathcal{H}$ is a covering, there is $B \in \mathcal{H}$ such that $x \in B$. This gives that $B \subseteq R(x)$. Since
$\mathcal{H}$ is irredundant, $B$ is a block.
Because both $R(x)$ and $B$ are blocks, $B = R(x)$ and $R(x) \in \mathcal{H}$.
\end{proof}

Let $L$ be a lattice with the least element $0$. An element $a$ is an \emph{atom} of $L$ if it covers $0$, that is, $0 \prec a$.  
We denote by $\mathcal{A}(L)$ the set of atoms of $L$, and simply by $\mathcal{A}$ if there is no danger of confusion.
The lattice $L$ is \emph{atomistic}, if $x = \bigvee \{a \in \mathcal{A} \mid a \leq x\}$ for all $x \in L$. It is well known that 
a Boolean lattice is atomistic if and only if it is completely distributive; see \cite{Gratzer}, for example.

Let $R$ be tolerance on $U$. In \cite{Jarv99} it is proved that $({^\UP}, {^\DOWN})$ is an order-preserving
Galois connection on the complete lattice $(\wp(U),\subseteq)$. This implies that 
$\wp(U)^\DOWN = \{ X^\DOWN \mid X \subseteq U \}$ is a complete lattice such that 
\[ \bigwedge \mathcal{H} = \bigcap \mathcal{H} \text{ \ and \ } \bigvee \mathcal{H} = \big ( \bigcup \mathcal{H} \big )^{\UP \DOWN} \]
for all $\mathcal{H} \subseteq \wp(U)^\DOWN$. Similarly, $\wp(U)^\UP = \{ X^\UP \mid X \subseteq U\}$ is is a complete lattice
in which for all $\mathcal{H} \subseteq \wp(U)^\UP$,
\[ \bigwedge \mathcal{H} = \big ( \bigcap \mathcal{H} \big )^{\DOWN \UP} \text{ \ and \ } \bigvee \mathcal{H} = \bigcup \mathcal{H}. \]
Because  $({^\UP}, {^\DOWN})$ is a Galois connection, $(\wp(U)^\DOWN,\subseteq)$ and $(\wp(U)^\UP,\subseteq)$ are isomorphic.
In \cite{JarRad14} we proved that if $R$ is a tolerance induced by an irredundant covering, $\wp(U)^\DOWN$ and  $\wp(U)^\UP$ are atomistic Boolean lattices such that 
$\{ R(x)^\DOWN \mid R(x) \text{ is a block}\, \}$ and $\{ R(x) \mid R(x) \text{ is a block}\, \}$ are their sets of atoms, respectively. 
By Lemma~\ref{Lem:irreducibleCovering},   $\{ R(x) \mid R(x) \text{ is a block} \, \}$ is the unique irredundant covering
inducing $R$. The Boolean complement operation in $\wp(U)^\DOWN$ is $X^\DOWN \mapsto X^{\DOWN c \DOWN}$ and the complement operation 
in $\wp(U)^\UP$ is $X^\UP \mapsto X^{\UP c \UP}$.

\begin{samepage}
\begin{lemma} \label{Lem:AppByCovering}
Let $R$ be a tolerance induced by a covering $\mathcal{H}$ of $U$, $B \in \mathcal{H}$, and $X \subseteq U$. Then:
\begin{enumerate}[\rm (a)]
\item $X^\UP = \bigcup \{B \in \mathcal{H} \mid X \cap B \neq \emptyset\}$.
\item $B^\DOWN = \{x \in U \mid R(x) = B\}$.
\item If $\mathcal{H}$ is irredundant, then $\emptyset \neq B^\DOWN = B \setminus \bigcup (\mathcal{H} \setminus \{B\})$.
\end{enumerate}
\end{lemma}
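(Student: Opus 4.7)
The plan is to prove each part by unpacking definitions and invoking reflexivity of $R$, with part (c) making essential use of Lemma~\ref{Lem:irreducibleCovering}.

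For part (a), I would start from the fact that $R(x) = \bigcup\{B \in \mathcal{H} \mid x \in B\}$, which was noted in the paragraph introducing $R_\mathcal{H}$. Thus $x \in X^\UP$ iff there exists $y \in X$ and $B \in \mathcal{H}$ with $x,y \in B$, which is equivalent to saying $x$ lies in some $B \in \mathcal{H}$ meeting $X$. This directly gives both inclusions for the set equality.

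For part (b), one direction is immediate: if $R(x)=B$ then $R(x) \subseteq B$, so $x \in B^\DOWN$. For the converse, suppose $x \in B^\DOWN$, i.e.\ $R(x) \subseteq B$. Since $R$ is reflexive, $x \in R(x)$, and therefore $x \in B$. But then $B \in \mathcal{H}$ is a block of $\mathcal{H}$ containing $x$, so by the formula for $R(x)$ we have $B \subseteq R(x)$; combined with the opposite inclusion this yields $R(x) = B$.

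For part (c), I would first establish $B^\DOWN \neq \emptyset$. Because $\mathcal{H}$ is irredundant, $\mathcal{H} \setminus \{B\}$ fails to cover $U$, so there exists $x \in U \setminus \bigcup(\mathcal{H} \setminus \{B\})$; since $\mathcal{H}$ still covers $U$, this $x$ lies in $B$ and in no other member, whence $R(x) = B$ and $x \in B^\DOWN$ by (b). This same argument proves the inclusion $B \setminus \bigcup(\mathcal{H} \setminus \{B\}) \subseteq B^\DOWN$. The main obstacle, and the one place where the covering-is-irredundant hypothesis is used crucially, is the reverse inclusion: if $x \in B^\DOWN$ then by (b) $R(x) = B$, and I must argue $x$ lies in no $B' \in \mathcal{H} \setminus \{B\}$. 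If it did, then $B' \subseteq R(x) = B$. Here I invoke Lemma~\ref{Lem:irreducibleCovering}, which guarantees that members of an irredundant covering are blocks of $R$; hence both $B$ and $B'$ are blocks, and the maximality forcing $B' = B$ contradicts $B' \in \mathcal{H} \setminus \{B\}$. This completes the equality in (c).
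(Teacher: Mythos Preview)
Your proposal is correct and follows essentially the same line as the paper. The only cosmetic differences are that the paper cites an external reference for (a) rather than arguing directly, and in the reverse inclusion of (c) the paper rules out $C \subseteq B$ with $C \neq B$ by appealing to irredundancy directly (since then $\mathcal{H}\setminus\{C\}$ would still cover $U$), whereas you route through the block property via Lemma~\ref{Lem:irreducibleCovering}; the paper also argues (c) from scratch without reusing (b).
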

\end{samepage} \enlargethispage{4mm}

\begin{proof} (a) The proof can be found in \cite{Jarv99}.

(b) Let $B \in \mathcal{H}$. If $x \in B^\DOWN$, then $R(x) \subseteq B$. Since $B$ is a block, $x \in B$ implies $B \subseteq R(x)$. Thus $R(x) = B$.
On the other hand, $R(x) = B$ gives $x \in B^\DOWN$.
 
(c) Suppose that $\mathcal{H}$ is irredundant and $B \in \mathcal{H}$. Then $X := B \setminus \bigcup (\mathcal{H} \setminus \{B\})$ is nonempty. 
We prove that $X = B^\DOWN$. Let $x \in X$ and $y \in R(x)$. 
Since $x \, R \, y$, there exists $C \in \mathcal{H}$ such that $x,y \in C$. 
If $C \neq B$, then $x \in  \bigcup (\mathcal{H} \setminus \{B\})$ and $x \notin X$, a contradiction. 
Therefore, $C = B$ and $y \in B$. Thus $R(x) \subseteq B$ and $x \in B^\DOWN$.
Conversely, let $x \in B^\DOWN$. Suppose that $x \in \bigcup (\mathcal{H} \setminus \{B\})$.
Then there exists $C \neq B$ in $\mathcal{H}$ such that $x \in C$. But $x \in C$ implies $C \subseteq R(x) \subseteq B$ which is not possible, because 
$\mathcal{H}$ is irredundant. Therefore, $x \in X$ and the proof is complete.
\end{proof}

We studied in \cite{JarRad14} the lattice-theoretical properties of $$\mathit{RS} = \{ (X^\DOWN, X^\UP) \mid X \subseteq U \}.$$ 
Let us recall here some of these results. We showed that $\mathit{RS}$ is a complete lattice if and only if $\mathit{RS}$ is a complete sublattice of 
the product $\wp(U)^\DOWN \times \wp(U)^\UP$.  This means that if $\mathit{RS}$ is a complete lattice, then for $\{(A_i,B_i)\}_{i \in I} \subseteq \mathit{RS}$,
\begin{equation}\label{Eq:RS_lattice_operations}
\bigwedge_{i \in I} (A_i,B_i) = \big ( \bigcap_{i \in I} A_i, \big ( \bigcap_{i \in I} B_i \big )^{\DOWN \UP} \big ) 
\text{ \ and \ }
\bigvee_{i \in I} (A_i,B_i) = \big ( \big ( \bigcup_{i \in I} A_i \big )^{\UP \DOWN}, \bigcup_{i \in I} B_i \big  ) .
\end{equation}
In addition, we proved that $\mathit{RS}$ is an algebraic and completely distributive lattice if and only if 
$R$ is induced by an irredundant covering. We also noted that if $R$ is a tolerance induced by an irredundant covering of $U$, 
then the algebra 
\[ (\textit{RS}, \vee, \wedge, {\sim}, (\emptyset,\emptyset), (U,U) ) \]
is a Kleene algebra such that the operations $\vee$ and $\wedge$ are defined as in \eqref{Eq:RS_lattice_operations} and
\[ {\sim} (X^\DOWN,X^\UP) = (X^{c \DOWN}, X^{c \UP}) = (X^{\UP c}, X^{\DOWN c}) .\]
Because $\mathit{RS}$ is a self-dual algebraic lattice, it is spatial by Remark~\ref{Rem:RingOfSets}. 
In addition, $\mathit{RS}$ forms a double $p$-algebra and a symmetric (double) Heyting algebra. 
Our next lemma describes the pseudocomplements and the dual pseudocomplements in $\mathit{RS}$.

\begin{lemma}
Let $R$ be a tolerance induced by an irredundant covering. For any $(A,B) \in \mathit{RS}$,
\[
(A,B)^* = (B^{c \DOWN}, B^{c \UP})  \text{\qquad and \qquad} (A,B)^+ = (A^{c \DOWN}, A^{c \UP}) .
\]
\end{lemma}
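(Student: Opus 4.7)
My plan is to treat $(A,B)^*$ as the main case and then pick up $(A,B)^+$ from \eqref{Eq:Pseudocomplements}. The candidate $(B^{c\DOWN}, B^{c\UP})$ is manifestly a rough set (the rough set of $B^c$), so it sits in $\mathit{RS}$. Using the meet formula \eqref{Eq:RS_lattice_operations}, the first coordinate of $(A,B) \wedge (B^{c\DOWN}, B^{c\UP})$ is $A \cap B^{c\DOWN} \subseteq A \cap B^c$, which vanishes since $A \subseteq B$; the second coordinate is $(B \cap B^{c\UP})^{\DOWN\UP}$, i.e.\@ the Boolean meet of $B$ with its own complement in $\wp(U)^\UP$, and hence is $\emptyset$.

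For maximality, let $(C,D) = (Y^\DOWN, Y^\UP) \in \mathit{RS}$ satisfy $(A,B) \wedge (C,D) = (\emptyset,\emptyset)$. The second-coordinate condition $(B \cap D)^{\DOWN\UP} = \emptyset$ says exactly that $B$ and $D$ have meet $0$ in $\wp(U)^\UP$, giving $D \subseteq B^{c\UP}$ by Boolean complementation. For the first-coordinate inequality $C \subseteq B^{c\DOWN}$ I argue by contradiction. Suppose some $x \in C$ admits $z \in R(x) \cap B$. Then $R(x) \subseteq Y$ forces $z \in Y$; atomicity of $\wp(U)^\UP$ (whose atoms are the blocks of $\mathcal{H}$) writes $B$ as the union of the atoms below it, so $z \in B$ lies in some block $K \in \mathcal{H}$ with $K \subseteq B$. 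Since $z \in K \cap Y$, Lemma~\ref{Lem:AppByCovering}(a) applied to $Y$ yields $K \subseteq Y^\UP = D \subseteq B^{c\UP}$. The atom $K$ is now below both $B$ and its Boolean complement in $\wp(U)^\UP$, forcing $K = \emptyset$ and contradicting $K \in \mathcal{H}$.

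For the dual formula I apply the identity ${\sim}x^+ = ({\sim}x)^*$ from \eqref{Eq:Pseudocomplements}. Since ${\sim}(A,B) = (B^c, A^c)$, the formula for $^*$ just proved evaluates $({\sim}(A,B))^*$ to $((A^c)^{c\DOWN}, (A^c)^{c\UP}) = (A^\DOWN, A^\UP)$; taking $\sim$ of this rough set of $A$ and simplifying via the duality identities $X^{\UP c} = X^{c\DOWN}$ and $X^{\DOWN c} = X^{c\UP}$ produces $(A^{c\DOWN}, A^{c\UP})$.

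The main obstacle is the first-coordinate maximality step. The Boolean information $D \subseteq B^{c\UP}$ is strictly weaker than the target $C \subseteq B^{c\DOWN} = B^{\UP c}$, and irredundancy of the covering is precisely what closes the gap: it is what makes $\wp(U)^\UP$ atomistic with atoms $\mathcal{H}$ (so that $B$ genuinely decomposes through constituent blocks), and it is what validates Lemma~\ref{Lem:AppByCovering}(a), the mechanism by which a block meeting $Y$ is lifted into $Y^\UP$ as a full atom. Without irredundancy neither the atomic decomposition of $B$ nor the embedding $K \subseteq Y^\UP$ would be available, and the contradiction collapses.
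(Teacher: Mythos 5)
Your proof is correct, and while it establishes the same two inclusions as the paper (the candidate annihilates $(A,B)$, and any annihilator lies below the candidate), the mechanics differ enough to be worth noting. For the maximality step the paper stays entirely inside the Galois-connection calculus: from $Y\subseteq B^{c\UP}$ it derives $X^{\UP}\subseteq Y^{\DOWN}$ via $X=Z^{\DOWN}$, $Y=Z^{\UP}$, and then squeezes $X\subseteq B^{c\DOWN}$ out of a chain of $\UP$/$\DOWN$ inequalities ending with $B^{c\UP\DOWN}=B^{c}$. You instead argue element-wise through the atomistic Boolean structure of $\wp(U)^{\UP}$: a point $x\in C$ violating $C\subseteq B^{c\DOWN}$ produces a block $K\in\mathcal{H}$ sitting below both $B$ and its complement, which kills the atom $K$. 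Both routes lean on irredundancy (yours explicitly, via atoms $=$ blocks and Lemma~\ref{Lem:AppByCovering}(a); the paper's implicitly, via the Boolean lattice structure of $\wp(U)^{\UP}$ already needed to get $D\subseteq B^{c\UP}$), and your version has the merit of making that dependence visible. Your treatment of $(A,B)^{+}$ is also tighter than the paper's ``can be proved similarly'': deriving it as ${\sim}\bigl(({\sim}(A,B))^{*}\bigr)$ from \eqref{Eq:Pseudocomplements} is legitimate here because $\mathit{RS}$ is already known to be a De~Morgan algebra with ${\sim}(X^\DOWN,X^\UP)=(X^{c\DOWN},X^{c\UP})$, so the dual pseudocomplement comes for free once $^{*}$ is computed. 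The only cosmetic quibble is that your direct argument for the first coordinate ($A\cap B^{c\DOWN}\subseteq B\cap B^{c}=\emptyset$) is fine but unnecessary, since emptiness of the second coordinate already forces it, as the paper observes.
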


\begin{proof}
The lattice operations of $\mathit{RS}$ are described in \eqref{Eq:RS_lattice_operations}. Recall also that $({^\UP}, {^\DOWN})$ is a Galois connection.
First, we show that $(A,B) \wedge  (B^{c \DOWN}, B^{c \UP}) = (A \cap B^{c \DOWN}, (B \cap B^{c \UP})^{\DOWN \UP})$ equals
$(\emptyset,\emptyset)$. It suffices to show that the right component $(B \cap B^{c \UP})^{\DOWN \UP}$ is $\emptyset$, because 
then necessarily the left component $A \cap B^{c \DOWN}$ is empty. Indeed,
$(B \cap B^{c \UP})^{\DOWN \UP} =  (B^\DOWN \cap B^{c \UP \DOWN})^{\UP} =  (B^\DOWN \cap B^{\DOWN \UP c})^{\UP}
\subseteq  (B^{\DOWN \UP} \cap B^{\DOWN \UP c})^{\UP} = \emptyset^\UP = \emptyset$.

On the other hand, if $(A,B) \wedge (X,Y) = \emptyset$ for some $(A,B) \in \mathit{RS}$, then $B \wedge Y = \emptyset$ 
in the corresponding Boolean lattice $\wp(U)^\UP$. This gives $Y \subseteq B^{c \UP}$. Because
$X = Z^\DOWN$ and $Y = Z^\UP$ for some $Z \subseteq U$, we have $X^\UP = Z^{\DOWN \UP}
\subseteq Z \subseteq Z^{\UP \DOWN} = Y^\DOWN$. This implies $X^{\UP \UP} \subseteq Y^{\DOWN \UP}
\subseteq Y \subseteq B^{c \UP}$. We obtain $X^\UP \subseteq (X^\UP)^{\UP \DOWN} \subseteq B^{c \UP \DOWN}$.
Now $B \in \wp(U)^\UP$ implies $B^c \in \wp(U)^\DOWN$. Hence $B^{c \UP \DOWN} = B^c$ and we get
$X \subseteq X^{\UP \DOWN} \subseteq B^{c \DOWN}$. Therefore, $(A,B) \leq  (B^{c \DOWN}, B^{c \UP})$.

The other claim for  $(A,B)^+$ can be proved similarly. 
\end{proof}
\noindent%
Now the rough set algebra
\[ \mathbb{RS} = (\textit{RS}, \vee, \wedge, {\sim}, ^{*}, (\emptyset,\emptyset), (U,U) ) \]
is a pseudocomplemented Kleene algebra.

\begin{proposition}\label{Prop:Regularity}
If $R$ is a tolerance induced by an irredundant covering, then the pseudocomplemented Kleene algebra $\mathbb{RS}$ is regular.
\end{proposition}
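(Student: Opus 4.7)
My plan is to verify the regularity condition (M) directly, using the formulas for $(A,B)^*$ and $(A,B)^+$ that were just established. Recall from the text that a pseudocomplemented Kleene algebra is regular if and only if it is determination-trivial, i.e.\ $x^* = y^*$ and $x^+ = y^+$ imply $x = y$. So it suffices to start with two rough sets $(A_1,B_1), (A_2,B_2) \in \mathit{RS}$ whose pseudocomplements and dual pseudocomplements agree, and conclude $(A_1,B_1) = (A_2,B_2)$.

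By the preceding lemma, the hypothesis $(A_1,B_1)^* = (A_2,B_2)^*$ is $(B_1^{c\DOWN}, B_1^{c\UP}) = (B_2^{c\DOWN}, B_2^{c\UP})$; in particular $B_1^{c\UP} = B_2^{c\UP}$. Similarly $(A_1,B_1)^+ = (A_2,B_2)^+$ gives $A_1^{c\DOWN} = A_2^{c\DOWN}$. The key observation is that $A_i \in \wp(U)^\DOWN$ and $B_i \in \wp(U)^\UP$, and that the text has already identified the Boolean complement operations in these atomistic Boolean lattices as $A \mapsto A^{c\DOWN}$ and $B \mapsto B^{c\UP}$, respectively. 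Since complements in a Boolean lattice are unique, the equalities above force $A_1 = A_2$ in $\wp(U)^\DOWN$ and $B_1 = B_2$ in $\wp(U)^\UP$, hence $(A_1,B_1) = (A_2,B_2)$ in $\mathit{RS}$. This establishes (M) and therefore regularity.

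I don't expect any real obstacle: once the previous lemma gives the explicit formulas $(A,B)^* = (B^{c\DOWN}, B^{c\UP})$ and $(A,B)^+ = (A^{c\DOWN}, A^{c\UP})$, the proof reduces to the uniqueness of complements in a Boolean lattice. An alternative route would go through Proposition~\ref{Prop:Varlet}, showing that every chain of prime filters in $\mathit{RS}$ has at most two elements; but since (M) is available directly and the formulas for ${}^*$ and ${}^+$ are already in hand, the componentwise complement argument is strictly shorter. The only point worth being careful about is that the second coordinates of $(A_i, B_i)^*$ (which live in $\wp(U)^\UP$) and the first coordinates of $(A_i, B_i)^+$ (which live in $\wp(U)^\DOWN$) are the ones that carry the Boolean complement, and it is precisely these coordinates that determine $B_i$ and $A_i$ respectively; so the two halves of hypothesis (M) recover the two coordinates of the rough set independently.
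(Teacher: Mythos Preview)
Your proof is correct and follows essentially the same approach as the paper: both verify condition (M) directly from the formulas $(A,B)^* = (B^{c\DOWN},B^{c\UP})$ and $(A,B)^+ = (A^{c\DOWN},A^{c\UP})$ supplied by the preceding lemma, and both recover $B$ and $A$ separately from the two hypotheses. The only cosmetic difference is that you invoke the fact (already recorded in the paper) that $B\mapsto B^{c\UP}$ and $A\mapsto A^{c\DOWN}$ are the Boolean complements in $\wp(U)^\UP$ and $\wp(U)^\DOWN$, hence injective, whereas the paper unwinds this explicitly via the Galois-connection identities $B^{c\UP}=B^{\DOWN c}$ and $B^{\DOWN\UP}=B$ for $B\in\wp(U)^\UP$.
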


\begin{proof}
We show that condition (M) holds. If $(A,B)^* = (C,D)^*$, then $B^{\DOWN c} =  B^{c \UP} = D^{c \UP} = D^{\DOWN c}$. 
So $B^\DOWN  = D^\DOWN$ and  $B^{\DOWN \UP}  = D^{\DOWN\UP}$. Because $B,D \in \wp(U)^\UP$,
$B = B^{\DOWN \UP}  = D^{\DOWN\UP} = D$. Similarly, $(A,B)^+ = (C,D)^+$ implies $A = C$.
We have proved that $(A,B) = (C,D)$.
\end{proof}

Let $R$ be a tolerance on $U$ induced by an irredundant covering. By Remark~\ref{Rem:RegularNormal}, the pseudocomplemented
Kleene algebra $\mathbb{RS}$ is normal. This means that for all $(A,B) \in \mathit{RS}$,
\[
  (A,B)^* \leq {\sim} (A,B) \leq (A,B)^+ .
\]
The elements $(X^\DOWN, X^\UP) \Rightarrow (Y^\DOWN, Y^\UP)$ and $(X^\DOWN,X^\UP) \Leftarrow (Y^\DOWN,Y^\UP)$ can be computed as in \eqref{Eq:RegularHeyting1} and 
\eqref{Eq:RegularHeyting2}. 
It is well known that for any distributive $p$-algebra $L$, the \emph{skeleton} $S^*(L) = \{a^* \mid a \in L\}$ forms a Boolean
algebra $(S^*(L),\sqcup,\wedge,{^*},0,1)$, where $a \sqcup b = (a^* \wedge b^*)^*$.
If $L$ is a distributive double $p$-algebra, also the \emph{dual skeleton} $S^+(L) = \{a^+ \mid a \in L\}$ forms
a Boolean algebra $(S^+(L),\vee,\sqcap,{^+},0,1)$, where $a \sqcap b = (a^+ \vee b^+)^+$.
We may now define $S^*(\mathit{RS}) = \{ (B^{c \DOWN}, B^{c \UP}) \mid B \in \wp(U)^\UP\}$ and 
$S^+(\mathit{RS}) = \{ (A^{c \DOWN}, A^{c \UP}) \mid A \in \wp(U)^\DOWN\}$. 

\begin{lemma} \label{Lem:Skeleton}
If $R$ is a tolerance induced by an irredundant covering, then
\begin{center}
$(\wp(U)^\UP,\supseteq) \cong (S^*(\mathit{RS}),\leq)$ \ and \ $(\wp(U)^\DOWN,\supseteq) \cong  (S^+(\mathit{RS}),\leq)$.
\end{center}
\end{lemma}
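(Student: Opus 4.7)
The plan is to exhibit explicit order-isomorphisms. Using the preceding lemma, $(A,B)^* = (B^{c\DOWN}, B^{c\UP})$, so this element depends only on the second coordinate $B$, while $B$ ranges over $\wp(U)^\UP$ as $(A,B)$ ranges over $\mathit{RS}$. Accordingly, I would define
\[
\varphi \colon \wp(U)^\UP \to S^*(\mathit{RS}), \quad B \mapsto (B^{c\DOWN}, B^{c\UP}),
\]
and show it is an order-isomorphism from $(\wp(U)^\UP,\supseteq)$ onto $(S^*(\mathit{RS}),\leq)$. Surjectivity is immediate from the explicit description of $S^*(\mathit{RS})$ given just before the lemma. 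For the second isomorphism I would do the same with the map $\psi \colon \wp(U)^\DOWN \to S^+(\mathit{RS})$ sending $A \mapsto (A^{c\DOWN}, A^{c\UP})$; the argument is entirely dual, using $(A,B)^+ = (A^{c\DOWN},A^{c\UP})$ from the previous lemma.

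For injectivity of $\varphi$, suppose $\varphi(B_1) = \varphi(B_2)$. Comparing second components gives $B_1^{c\UP} = B_2^{c\UP}$. Using the duality $X^{c\UP} = X^{\DOWN c}$ (recorded in the introduction), taking complements yields $B_1^\DOWN = B_2^\DOWN$, hence $B_1^{\DOWN\UP} = B_2^{\DOWN\UP}$. Since $B_1, B_2 \in \wp(U)^\UP$, each equals its own $^{\DOWN\UP}$-closure (a standard consequence of the Galois connection $({^\UP},{^\DOWN})$ recalled in Section~\ref{Sec:RoughSets}), so $B_1 = B_2$.

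For the order statement, I would verify both directions. If $B_1 \supseteq B_2$, then $B_1^c \subseteq B_2^c$, and the monotonicity of $^\DOWN$ and $^\UP$ yields $B_1^{c\DOWN} \subseteq B_2^{c\DOWN}$ and $B_1^{c\UP} \subseteq B_2^{c\UP}$, so $\varphi(B_1) \leq \varphi(B_2)$ in the coordinatewise order of $\mathit{RS}$. Conversely, if $\varphi(B_1) \leq \varphi(B_2)$, comparing second coordinates gives $B_1^{c\UP} \subseteq B_2^{c\UP}$; taking complements and applying the duality $X^{c\UP} = X^{\DOWN c}$ gives $B_1^\DOWN \supseteq B_2^\DOWN$, whence $B_1^{\DOWN\UP} \supseteq B_2^{\DOWN\UP}$, and once more using $B_i = B_i^{\DOWN\UP}$ we conclude $B_1 \supseteq B_2$.

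I do not expect a serious obstacle here; the only delicate point is consistently applying the duality $X^{\DOWN c} = X^{c\UP}$ together with the Galois-connection identities $B = B^{\DOWN\UP}$ for $B \in \wp(U)^\UP$ and $A = A^{\UP\DOWN}$ for $A \in \wp(U)^\DOWN$. Once these are invoked cleanly, both bijectivity and the order-reversing property fall out symbolically, and the dual argument for $\psi$ requires no new ideas.
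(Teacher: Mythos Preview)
Your proposal is correct and follows essentially the same approach as the paper: you define the very same map $\varphi\colon B \mapsto (B^{c\DOWN},B^{c\UP})$, verify surjectivity from the explicit description of $S^*(\mathit{RS})$, and establish both directions of the order correspondence by comparing second coordinates and invoking the duality $X^{c\UP}=X^{\DOWN c}$ together with $B=B^{\DOWN\UP}$ for $B\in\wp(U)^\UP$. The only cosmetic difference is that you prove injectivity separately, whereas the paper (implicitly) obtains it from the two-way order preservation; this is harmless redundancy.
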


\begin{proof} We prove that the map $\varphi \colon B \mapsto (B^{c \DOWN}, B^{c \UP})$ is an order-isomorphism from $(\wp(U)^\UP,\supseteq)$ to 
$(S^*(\mathit{RS}),\leq)$. If $B, C \in \wp(U)^\UP$ and $B \supseteq C$, then $B^c \subseteq C^c$. This gives $B^{c \DOWN} \subseteq C^{c \DOWN}$ and  
$B^{c \UP} \subseteq C^{c \UP}$, that is, $(B^{c \DOWN}, B^{c \UP}) \leq (C^{c \DOWN}, C^{c \UP})$ in $\mathit{RS}$. Conversely,
$(B^{c \DOWN}, B^{c \UP}) \leq (C^{c \DOWN}, C^{c \UP})$ implies $B^{\DOWN c} = B^{c \UP} \subseteq C^{c \DOWN} = C^{\UP c}$ and
$B^\DOWN \supseteq C^\DOWN$. Therefore, also $B^{\DOWN \UP} \supseteq C^{\DOWN \UP}$. Because  $B, C \in \wp(U)^\UP$, 
$B = B^{\DOWN \UP}$ and $C = C^{\DOWN \UP}$. Thus $B \supseteq C$. The map $\varphi$ is trivially onto.

Similarly, we can show that map $A \mapsto (A^{c \UP}, A^{c \UP})$ is an order-isomorphism from $(\wp(U)^\DOWN,\supseteq)$ to 
$(S^+(\mathit{RS}),\leq)$. 
\end{proof}

Note that all lattices mentioned in Lemma~\ref{Lem:Skeleton} are as Boolean lattices also dually isomorphic with itself. 
Our next proposition describes the set of completely join-irreducible elements of $\mathit{RS}$.

\begin{proposition}\label{Prop:JoinIrreducibles}
Let $R$ be a tolerance induced by an irredundant covering. Then
\begin{align*}
\mathcal{J}(\mathit{RS}) & = \{ (R(x)^{\DOWN},R(x)^{\UP}) \mid \text{$R(x)$ is a block} \} \\
            & \cup  \{ (\emptyset,R(x)) \mid \text{$R(x)$ is a block and $|R(x)| \geq 2$} \}. 
\end{align*}
\end{proposition}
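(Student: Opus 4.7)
The plan is to prove both inclusions for $\mathcal{J}(\mathit{RS}) = J_1 \cup J_2$, where $J_1 := \{(R(x)^\DOWN, R(x)^\UP) \mid R(x) \text{ is a block}\}$ and $J_2 := \{(\emptyset, R(x)) \mid R(x) \text{ is a block and } |R(x)| \geq 2\}$. First I verify $J_1 \cup J_2 \subseteq \mathit{RS}$: the pair in $J_1$ is by definition the image of $X = R(x)$, and for $J_2$ I pick $y \in R(x)^\DOWN$, nonempty by Lemma~\ref{Lem:AppByCovering}(c), and observe via Lemma~\ref{Lem:AppByCovering}(b) that $X = \{y\}$ yields $X^\UP = R(y) = R(x)$ and $X^\DOWN = \emptyset$; the latter because $z \in \{y\}^\DOWN$ would force $R(z) = \{y\}$, hence $z = y$ and $R(y) = \{y\}$, contradicting $|R(y)| \geq 2$.

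Next I show $J_1 \cup J_2 \subseteq \mathcal{J}(\mathit{RS})$ using that atoms in the atomistic Boolean lattices $\wp(U)^\DOWN$ and $\wp(U)^\UP$ are completely join-irreducible there. If $(\emptyset, R(x)) = \bigvee_i (A_i, C_i)$ in $\mathit{RS}$, the second-coordinate join gives $\bigcup_i C_i = R(x)$; as $R(x)$ is an atom of $\wp(U)^\UP$, some $C_i$ equals $R(x)$, forcing $A_i = \emptyset$. If $(R(x)^\DOWN, R(x)^\UP) = \bigvee_i (A_i, C_i)$, the first-coordinate join in $\wp(U)^\DOWN$ equals the atom $R(x)^\DOWN$, so some $A_i = R(x)^\DOWN$; writing $(A_i, C_i) = (Y_i^\DOWN, Y_i^\UP)$ and picking any $z \in R(x)^\DOWN$, Lemma~\ref{Lem:AppByCovering}(b) gives $R(z) = R(x)$, whence $z \in Y_i^\DOWN$ forces $R(x) \subseteq Y_i$ and $C_i = Y_i^\UP \supseteq R(x)^\UP$; combined with $C_i \subseteq R(x)^\UP$ this yields $C_i = R(x)^\UP$.

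For the reverse inclusion $\mathcal{J}(\mathit{RS}) \subseteq J_1 \cup J_2$, I will show that every $(A, C) = (Y^\DOWN, Y^\UP) \in \mathit{RS}$ equals $\bigvee S$ with $S := \{j \in J_1 \cup J_2 \mid j \leq (A, C)\}$, so that any completely join-irreducible $(A, C)$ must itself lie in $S$. For the second coordinate I invoke the atomistic decomposition $C = \bigcup\{B \in \mathcal{H} \mid B \subseteq C\}$ in $\wp(U)^\UP$ and cover each such block either by $(\emptyset, B) \in J_2 \cap S$ when $|B| \geq 2$, or, when $B = \{z\}$ is a singleton block, by observing that $R(z) = \{z\}$ forces $z \in Y$ and hence $z \in A$, so that $(\{z\}, \{z\}) \in J_1 \cap S$ supplies $B$. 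For the first coordinate, atomism in $\wp(U)^\DOWN$ gives $A = (\bigcup\{R(x)^\DOWN \mid R(x) \text{ block}, R(x)^\DOWN \subseteq A\})^{\UP\DOWN}$; any such $R(x)^\DOWN \subseteq A = Y^\DOWN$ propagates via Lemma~\ref{Lem:AppByCovering}(b), applied to any $z \in R(x)^\DOWN$, to $R(x) \subseteq Y$ and hence $R(x)^\UP \subseteq C$, placing $(R(x)^\DOWN, R(x)^\UP)$ into $S$.

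The main subtlety will be this spatial decomposition step: one must carefully distinguish joins in the three relevant lattices, noting that the first-coordinate join in $\mathit{RS}$ (taken in $\wp(U)^\DOWN$) is not simply set union, and one must treat singleton blocks separately since $(\emptyset, B) \not\in \mathit{RS}$ when $|B| = 1$, with the contribution of such a block supplied instead by $(B, B) \in J_1$.
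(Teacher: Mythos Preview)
Your argument is correct. The forward inclusion $J_1 \cup J_2 \subseteq \mathcal{J}(\mathit{RS})$ follows essentially the same lines as the paper: atoms of $\wp(U)^\UP$ handle $J_2$, and atomicity of $\wp(U)^\DOWN$ in the first coordinate handles $J_1$, with the passage from $A_i = R(x)^\DOWN$ to $C_i \supseteq R(x)^\UP$ being the same idea (the paper phrases it via $R(x)^{\DOWN\UP} = X_k^{\DOWN\UP} \subseteq X_k$, you via Lemma~\ref{Lem:AppByCovering}(b)).

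The reverse inclusion is where you deviate slightly. The paper does not prove directly that $J_1 \cup J_2$ is join-dense in $\mathit{RS}$; instead it invokes \cite[Remark~4.6]{JarRad14}, which gives join-density of the larger family $\{(R(x)^\DOWN,R(x)^\UP)\mid x\in U\}\cup\{(\emptyset,R(x))\mid |R(x)|\ge 2\}$ (with $x$ ranging over \emph{all} of $U$, not only those with $R(x)$ a block), and then argues separately that each element of this larger family is itself a join of elements of $J_1\cup J_2$. Your route---using atomism of $\wp(U)^\DOWN$ and $\wp(U)^\UP$ to build the decomposition directly, and handling singleton blocks via the observation that $B=\{z\}\subseteq Y^\UP$ forces $z\in Y^\DOWN$---is more self-contained and avoids the external citation. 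The paper's detour buys nothing extra here; your direct spatiality argument is the cleaner option.
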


\begin{proof} Let $x$ be an element such that $R(x)$ is an block. If $|R(x)| \geq 2$, then $(\{x\}^\DOWN, \{x\}^\UP) = 
(\emptyset, R(x)) \in \mathit{RS}$. Since $R(x)$ is an atom of $\wp(U)^\UP$,
$(\emptyset, R(x))$ is an atom of $\mathit{RS}$. Trivially, all atoms are completely join-irreducible.

Next, we show that if $R(x)$ is a block, then $(R(x)^{\DOWN},R(x)^{\UP})$ is completely join-irreducible. Assume
$(R(x)^\DOWN, R(x)^\UP) = \bigvee \{ ({X_i}^\DOWN, {X_i}^\UP) \mid i \in I\}$. This means that
${X_i}^\DOWN \subseteq R(x)^\DOWN$ for all $i \in I$. Because each ${X_i}^\DOWN$ belongs to $\wp(U)^\DOWN$ and
$R(x)^\DOWN$ is an atom of $\wp(U)^\DOWN$, $\emptyset \subseteq {X_i}^\DOWN \subseteq R(x)^\DOWN$ implies that
every ${X_i}^\DOWN$ equals either to $\emptyset$ or to $R(x)^\DOWN$. But since $x \in R(x)^\DOWN = \{x\}^{\UP \DOWN}$,
each ${X_i}^\DOWN$ cannot be $\emptyset$. Therefore, there exists $k \in I$ such that  ${X_k}^\DOWN = R(x)^\DOWN$.
We have $R(x) = \{x\}^{\UP} = \{x\}^{\UP \DOWN \UP} = R(x)^{\DOWN \UP} = {X_k}^{\DOWN \UP} \subseteq X_k$.
Thus $R(x)^\UP \subseteq {X_k}^\UP$. By assumption, ${X_i}^\UP \subseteq R(x)^\UP$ for all $i \in I$.
Hence $R(x)^\UP = {X_k}^\UP$ and $(R(x)^\DOWN, R(x)^\UP) = ({X_k}^\DOWN, {X_k}^\UP)$.
 
On the other hand, suppose that $(X^\DOWN, X^\UP)$ is a completely join-irreducible element of $\mathit{RS}$. In \cite[Remark 4.6]{JarRad14}
we proved that each element of $\mathit{RS}$ can be represented as the join of a subset of 
\[
 \{ (R(x)^{\DOWN},R(x)^{\UP}) \mid x \in U \} \cup \{ (\emptyset,R(x)) \mid |R(x)| \geq 2 \} .
\]
But since $(X^\DOWN, X^\UP)$ is itself a completely join-irreducible element, we have that 
$(X^\DOWN, X^\UP) =  (R(x)^{\DOWN},R(x)^{\UP})$ for some $x \in U$, or
$(X^\DOWN, X^\UP) = (\emptyset,R(x))$ for some $x$ such that $|R(x)| \geq 2$. 

Let us assume first that $(X^\DOWN, X^\UP) =  (R(x)^{\DOWN},R(x)^{\UP})$ for some $x \in U$.
Because $R(x) \in \wp(U)^\UP$, there is a set $\{x_i\}_{i \in I} \subseteq U$ such that
$R(x) = \bigcup_{i \in I} R(x_i)$ and each $R(x_i)$ is a block. This gives
$R(x)^\UP = \big ( \bigcup_{i \in I} R(x_i) \big )^\UP =  \bigcup_{i \in I} R(x_i)^\UP$.
Analogously, $(\bigcup_{i \in I} R(x_i)^\DOWN)^{\UP \DOWN} = (\bigcup_{i \in I}  R(x_i)^{\DOWN \UP})^\DOWN
= (\bigcup_{i \in I} R(x_i))^\DOWN = R(x)^\DOWN$. This means that
\[
 (R(x)^\DOWN, R(x)^\UP) = \bigvee_\mathit{RS} \{ (R(x_i)^\DOWN, R(x_i)^\UP) \mid i \in I\}.
\]
Since $(R(x)^\DOWN, R(x)^\UP)$ is completely join-irreducible, we have that 
$(R(x)^\DOWN, R(x)^\UP) = (R(x_k)^\DOWN, R(x_k)^\UP)$ for some block $R(x_k)$.

Secondly, if $(X^\DOWN, X^\UP) = (\emptyset,R(x))$ for some $x$ such that $|R(x)| \geq 2$, then
$R(x) = \bigcup_{i \in I} R(x_i)$ for some index set $I$ such that each $R(x_i)$ is a block.
Because $R(x_i) \subseteq R(x)$ for all $i \in I$, we have $x \, R \, x_i$ for all $i \in I$.
If $x \neq x_i$, then $x,x_i \in R(x_i)$ means $|R(x_i)| \geq 2$, 
and if $x = x_i$, the assumption $|R(x)| \geq 2$ gives $|R(x_i)| \geq 2$. Therefore, each $(\emptyset,R(x_i) )$ is in $\mathit{RS}$ and
\[
(\emptyset,R(x)) = \bigvee_\mathit{RS} \{ (\emptyset, R(x_i) ) \mid i \in I\}.
\]
But since $(\emptyset, R(x))$ is completely join-irreducible by assumption, we have that 
$(\emptyset, R(x)) = (\emptyset,R(x_k))$ for some $k \in I$ such that $|R(x_k)| \geq 2$ and
$R(x_k)$ is a block.
\end{proof}

For a tolerance induced by an irredundant covering $\mathcal{H}$, we can express the completely join-irreducible elements of  
$\mathit{RS}$ also using elements of $\mathcal{H}$:
\[
\mathcal{J}(\mathit{RS}) = \{ (B^\DOWN,B^\UP) \mid B \in \mathcal{H}\} \cup \{ (\emptyset,B) \mid B \in \mathcal{H} \text{ and } |B|\geq 2 \}.
\]
Recall that $B^\DOWN$ and $B^\UP$ are given in terms of the irredundant covering $\mathcal{H}$ in Lemma~\ref{Lem:AppByCovering}.
Note also that since $\mathit{RS}$ is spatial, its every element can be described as the join of some elements in $\mathcal{J}(\mathit{RS})$.

Because $\mathbb{RS}$ is a completely distributive Kleene algebra and for any $(A,B)$, ${\sim} (A,B) = (B^c, A^c)$, we can by \eqref{Eq:Gee} define
the map $g \colon \mathcal{J}(\textit{RS}) \to \mathcal{J}(\textit{RS})$ by setting
\[ g((C,D)) = \bigwedge \{ (X^\DOWN, X^\UP) \mid (X^\DOWN, X^\UP) \nleq (D^c, C^c) \} \]
for any $(C,D) \in \mathcal{J}(RS)$.

\begin{lemma} \label{Lem:Brothers}
Let $R$ be a tolerance induced by an irredundant covering. 
\begin{enumerate}[\rm (a)]
\item If $R(x)$ is a block such that $|R(x)| \geq 2$, then
 \begin{center}
  $g((\emptyset, R(x))) = (R(x)^\DOWN, R(x)^\UP)$ \ and \ $g((R(x)^\DOWN,R(x)^\UP)) = (\emptyset, R(x))$.
 \end{center}
\item If $R(x) = \{x\}$, then $g((\{x\},\{x\})) =  (\{x\},\{x\})$.
\end{enumerate} 
\end{lemma}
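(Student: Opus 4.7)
The plan is to verify directly the universal property characterising $g$: because $\mathit{RS}$ is algebraic and completely distributive, $g(j) \in \mathcal{J}(\mathit{RS})$ is the unique completely join-irreducible element satisfying, for every $(A,B) \in \mathit{RS}$,
\[
g(j) \leq (A,B) \iff (A,B) \nleq {\sim} j.
\]
Thus each claimed identity reduces to checking this biconditional for the proposed right-hand side. Property~(J2), giving $g \circ g = \mathrm{id}$, means that for part~(a) it will suffice to prove just one of the two equalities.

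For (a), take $j = (\emptyset, R(x))$; then ${\sim} j = (R(x)^c, U)$, so $(A,B) \nleq {\sim} j$ is simply the condition $A \cap R(x) \neq \emptyset$. The direction ``$(R(x)^\DOWN, R(x)^\UP) \leq (A,B) \Rightarrow A \cap R(x) \neq \emptyset$'' is immediate from $\emptyset \neq R(x)^\DOWN \subseteq R(x)$, which is supplied by parts~(b) and~(c) of Lemma~\ref{Lem:AppByCovering}. The converse is the substantial step: write $A = Z^\DOWN$ and $B = Z^\UP$, pick $z \in A \cap R(x)$, and use that the atoms of the Boolean lattice $\wp(U)^\DOWN$ are exactly the sets $R(y)^\DOWN$ with $R(y)$ a block to locate an atom $R(y)^\DOWN$ containing $z$ with $R(y)^\DOWN \subseteq A$. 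Then $R(z) = R(y)$, and $z \in R(x)$ gives $x \in R(z) = R(y)$. The block-absorption property yields $R(y) \subseteq R(x)$, and maximality of $R(y)$ as a preblock then forces $R(y) = R(x)$, so $R(x) \subseteq Z$; monotonicity of~${^\DOWN}$ and~${^\UP}$ delivers $R(x)^\DOWN \subseteq A$ and $R(x)^\UP \subseteq B$, as required.

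For (b), when $R(x) = \{x\}$, one first observes ${\sim}(\{x\},\{x\}) = (U \setminus \{x\}, U \setminus \{x\})$, so the right-hand side of the biconditional becomes ``$x \in A$ or $x \in B$,'' while the left-hand side $(\{x\},\{x\}) \leq (A,B)$ is ``$x \in A$ and $x \in B$.'' These are equivalent because $R(x) = \{x\}$ collapses the upper and lower approximations at the point $x$: writing $A = Z^\DOWN$ and $B = Z^\UP$,
\[
x \in Z^\DOWN \iff R(x) \subseteq Z \iff x \in Z \iff R(x) \cap Z \neq \emptyset \iff x \in Z^\UP,
\]
so $x \in A$ is equivalent to $x \in B$, completing the verification of the universal property.

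The main obstacle I expect is the atom-chasing in the converse direction of part~(a): one must promote the pointwise statement $A \cap R(x) \neq \emptyset$ to the set-wise containment $R(x) \subseteq Z$, which requires simultaneous use of the atomistic structure of $\wp(U)^\DOWN$ (as identified in \cite{JarRad14}) and the maximality of blocks in the irredundant covering inducing $R$.
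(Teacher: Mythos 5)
Your overall strategy---verifying that each candidate is the least element of $\mathit{RS}$ not below ${\sim}j$---is exactly the paper's, and part (b), the easy direction of part (a), and the appeal to (J2) are all fine. The gap is in the step you yourself identify as the substantial one. Having picked $z \in A \cap R(x)$ with $A = Z^\DOWN$, you claim that atomisticity of $\wp(U)^\DOWN$ lets you locate an atom $R(y)^\DOWN$ \emph{containing $z$} with $R(y)^\DOWN \subseteq A$. This can fail: $\wp(U)^\DOWN$ is atomistic with respect to its lattice join, which is $\mathcal{S} \mapsto (\bigcup \mathcal{S})^{\UP\DOWN}$ rather than set union, so a point of $A$ need not lie in any atom at all. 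Indeed, by Lemma~\ref{Lem:AppByCovering}(b) a point $z$ belongs to some atom $R(y)^\DOWN$ if and only if $R(z)$ is itself a block, and this is false for points covered by two or more blocks. In the paper's own example, take the block $R(a) = \langle a \rangle = \{a,j,x\}$ and $A = U = U^\DOWN$: the point $x = a \vee b$ lies in $A \cap R(a)$, but $R(x) = \langle a \rangle \cup \langle b \rangle$ is not a block (since $a$ and $b$ are not $R$-related), so $x$ belongs to no atom of $\wp(U)^\DOWN$ and your chain $R(z) = R(y)$, \dots, never gets started. Since you give no criterion for choosing $z$, and the existence of a ``good'' $z$ (one with $R(z)$ a block) in $A \cap R(x)$ is itself something that needs proof, the argument as written is incomplete.

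The repair is shorter than the detour and needs no atoms: for \emph{any} $z \in A \cap R(x)$, the fact that $R(x)$ is a block gives $R(x) \subseteq R(z)$ (every element of a block is $R$-related to every other element of it), while $z \in Z^\DOWN$ gives $R(z) \subseteq Z$; hence $R(x) \subseteq Z$, and monotonicity of $^\DOWN$ and $^\UP$ yields $(R(x)^\DOWN, R(x)^\UP) \leq (Z^\DOWN, Z^\UP) = (A,B)$. This is precisely the paper's argument. (Note also that your own later steps force $R(y) = R(x)$, so the only atom that could ever serve is $R(x)^\DOWN$ itself, and $R(x)^\DOWN \subseteq A$ is exactly what the direct computation establishes.)
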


\begin{proof} (a) Suppose that $R(x)$ is a block such that $|R(x)| \geq 2$. Now 
$(X^\DOWN, X^\UP) \nleq (R(x)^c, U)$ is equivalent to $X^\DOWN \nsubseteq R(x)^c$. This
means that $X^\DOWN \cap R(x) \neq \emptyset$. Thus there is $y \in X^\DOWN \cap R(x)$.
Because $y \in R(x)$, we have $R(x) \subseteq R(y)$, and $y \in X^\DOWN$ yields 
$R(x) \subseteq R(y) \subseteq X$. Thus $R(x)^\DOWN \subseteq X^\DOWN$ and 
$R(x)^\UP \subseteq X^\UP$. Therefore, $(R(x)^\DOWN, R(x)^\UP) \leq 
\bigwedge \{ (X^\DOWN, X^\UP) \mid (X^\DOWN, X^\UP) \nleq (R(x)^c, U) \} = g((\emptyset, R(x)))$.

On the other hand, $x \in \{x\}^{\UP\DOWN} =  R(x)^\DOWN \nsubseteq R(x)^c$ gives $(R(x)^\DOWN,R(x)^\UP) \nleq (R(x)^c,U)$
and $g((\emptyset,R(x))) \leq (R(x)^\DOWN,R(x)^\UP)$. Thus $g((\emptyset, R(x))) = (R(x)^\DOWN,R(x)^\UP)$.
Because $g(g(j)) = j$ for any $j \in \mathcal{J}(RS)$, we have that 
$g((R(x)^\DOWN,R(x)^\UP)) = (\emptyset, R(x))$.

(b) If $R(x) = \{x\}$, then $R(x)$ is a block and $R(x)^\DOWN = R(x)^\UP = R(x)$, because $x$ is $R$-related only to itself. 
Therefore, $(\{x\},\{x\}) \in \mathcal{J}$. Now ${\sim}(\{x\},\{x\}) = (\{x\}^c, \{x\}^c)$
and $(X^\DOWN, X^\UP) \nleq (\{x\}^c, \{x\}^c)$ holds if and only if $x \in X^\DOWN ¸\subseteq X^\UP$. This implies that
$(\{x\}, \{x\}) \leq  \bigwedge \{ (X^\DOWN, X^\UP) \mid (X^\DOWN, X^\UP) \nleq (\{x\}^c, \{x\}^c) \} = g((\{x\},\{x\}))$.
On the other hand, $(\{x\}, \{x\}) \nleq  (\{x\}^c, \{x\}^c)$ implies $g((\{x\},\{x\})) \leq (\{x\},\{x\})$.
Thus we have $g((\{x\},\{x\})) = (\{x\},\{x\})$.
\end{proof}

In the next section (see Lemma~\ref{Lem:PartitionOfAtoms}), we will show that if $\mathbb{L}$ is a regular pseudocomplemented Kleene algebra
defined on an algebraic lattice, then $x \in \mathcal{J}$ is an atom if and only if $x \leq g(x)$. Therefore, by Lemma~\ref{Lem:Brothers},
\begin{align} 
\mathcal{A}(\mathit{RS}) & = \{ ( \{x\},\{x\}) \mid R(x) = \{x\} \} \label{Eq:Atoms}\\
			 & \cup  \{ (\emptyset,R(x)) \mid \text{$R(x)$ is a block and $|R(x)| \geq 2$} \}. \nonumber
\end{align}
Note that \eqref{Eq:Atoms} can be seen also directly. Let $R(x)$ be a block. If $|R(x)| \geq 2$, then we have already
seen in the proof of Proposition~\ref{Prop:JoinIrreducibles} that $(\emptyset,R(x))$ is an atom. Obviously, the
completely join-irreducible element $(R(x)^\DOWN, R(x)^\UP)$ cannot now be an atom. If $R(x) = \{x\}$,
then  $(R(x)^\DOWN, R(x)^\UP) = (\{x\},\{x\})$ is an atom, because there is no element $(\emptyset, \{x\})$ in $\mathit{RS}$.

Each equivalence relation $E$ on $U$ is ``induced'' by the irredundant covering $U/E$ which consists of the equivalence classes of $E$.
The covering $U/E$ forms a partition of $U$, that is, the sets in $U/E$ do not intersect. The following lemma presents equivalent conditions 
for such ``isolated blocks'' in case of tolerances induced by irredundant coverings.

\begin{lemma} Let $R$ be a tolerance induced by an irredundant covering. For each $R(x)$ that is a block, the following are equivalent:
\begin{enumerate}[\rm (a)]
 \item $R(y) = R(x)$ for all $y \in R(x)$;
 \item $(R(x)^\DOWN, R(x)^\UP) = (R(x), R(x))$;
 \item Either $R(x) = \{x\}$ or $(\emptyset, R(x))$ is the only atom of $\mathit{RS}$ below $(R(x)^\DOWN, R(x)^\UP)$.
\end{enumerate}

\begin{proof} (a) $\Rightarrow$ (b): If $R(y) = R(x)$ for all $y \in R(x)$, then $y \in R(x)$ yields $y \in R(x)^\DOWN$. 
Thus $R(x)^\DOWN = R(x)$. This implies $R(x) \subseteq R(x)^\UP = R(x)^{\DOWN \UP} \subseteq R(x)$ and $R(x)^\UP = R(x)$.

(b) $\Rightarrow$ (c): If $|R(x)| = 1$, then $R(x) = \{x\}$. Suppose that $|R(x)| \geq 2$. By \eqref{Eq:Atoms}, $(\emptyset, R(x))$ is an atom of $\mathit{RS}$.
Clearly, $(\emptyset, R(x)) < (R(x)^\DOWN,R(x)^\UP)$. Assume $(X^\DOWN,X^\UP)$ is an atom below  $(R(x)^\DOWN, R(x)^\UP)$.
Then, by \eqref{Eq:Atoms}, we have that either (i) $X^\DOWN = X^\UP = \{y\}$ for some $y$ such that $R(y) = \{y\}$ or
(ii) $X^\DOWN = \emptyset$ and $X^\UP = R(y)$ for some $y$ such that $R(y)$ is a block having at least two elements.
(i)~If $R(y) = \{y\}$, then $\{y\}\subseteq R(x)$ gives that $y \, R \, x$, and hence
$y=x$. However, this is impossible, because $|R(x)| \geq 2$ and $|R(y)| = 1$. (ii)~If $X^\DOWN = \emptyset$ and 
$X^\UP = R(y)$ for some $y$  such that $R(y)$ is a block having at least two elements, then $(\emptyset,R(y)) \leq (R(x), R(x))$ 
gives $R(y) \subseteq R(x)$. Because $R(x)$ and $R(y)$ are blocks, we have $R(x) = R(y)$.

(c) $\Rightarrow$ (a): If $R(x) = \{x\}$, then obviously (a) is satisfied. On the other hand,
suppose that $(\emptyset, R(x))$ is the only atom of $\mathit{RS}$ below $(R(x)^\DOWN, R(x)^\UP)$. 
If $y \in R(x)$, then $R(x) \subseteq R(y)$, because $R(x)$ is a block. We are going to
prove that $R(x) = R(y)$. Assume by contraction that $R(x)\subset R(y)$. This means that there
is $z \in R(y) \setminus R(x)$. Since $z \, R \, y$, there is $w \in U$ such that $R(w)$ is a block, $R(w) \neq R(x)$, and $z,y \in R(w)$.
Therefore, $|R(w)| \geq 2$. Because $y \in R(w)$, $R(w) \subseteq R(y)$. On the other hand, $x \in R(x) \subset R(y)$ gives $y \in R(x)$
and $R(y) = \{y\}^\UP \subseteq R(x)^\UP$. Therefore, $(\emptyset, R(w)) \leq (R(x)^\DOWN, R(x)^\UP)$. We have that $R(x) = R(w)$
and $z \in R(x)$, a contradiction.
\end{proof}
\end{lemma}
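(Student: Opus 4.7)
My plan is to prove the three-way equivalence cyclically in the order (a)$\Rightarrow$(b)$\Rightarrow$(c)$\Rightarrow$(a). The two main tools will be Lemma~\ref{Lem:AppByCovering}, which describes lower and upper approximations in terms of the irredundant covering $\mathcal{H}$, together with the explicit list \eqref{Eq:Atoms} of atoms of $\mathit{RS}$.

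For (a)$\Rightarrow$(b), I would invoke Lemma~\ref{Lem:AppByCovering}(b), which identifies $R(x)^\DOWN$ with $\{z \in U \mid R(z)=R(x)\}$; under hypothesis (a) this set is exactly $R(x)$, giving $R(x)^\DOWN = R(x)$. To pass from here to $R(x)^\UP = R(x)$, I would combine the kernel property $R(x)^{\DOWN\UP} \subseteq R(x)$ of the Galois connection $({^\UP},{^\DOWN})$ with the reflexivity inclusion $R(x) \subseteq R(x)^\UP$, obtaining $R(x)^\UP = R(x)^{\DOWN\UP} \subseteq R(x) \subseteq R(x)^\UP$.

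For (b)$\Rightarrow$(c), if $|R(x)| = 1$ then $R(x) = \{x\}$ and there is nothing to prove. Otherwise $|R(x)| \geq 2$, and by \eqref{Eq:Atoms} the element $(\emptyset, R(x))$ is an atom of $\mathit{RS}$ sitting strictly below $(R(x),R(x))$. I would then rule out any rival atom below $(R(x),R(x))$ using \eqref{Eq:Atoms}: a singleton-type atom $(\{y\},\{y\})$ with $R(y)=\{y\}$ would force $y \in R(x)$ with $R(y)=\{y\}$, contradicting $|R(x)|\geq 2$; a block-type atom $(\emptyset, R(y))$ with $|R(y)|\geq 2$ would give $R(y) \subseteq R(x)$, and the maximality of blocks would force $R(y) = R(x)$.

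The delicate step will be (c)$\Rightarrow$(a). The case $R(x) = \{x\}$ is immediate, so I assume $|R(x)| \geq 2$ and argue by contradiction: suppose some $y \in R(x)$ has $R(x) \subsetneq R(y)$ (the inclusion $R(x) \subseteq R(y)$ is automatic since $R(x)$ is a preblock containing $y$). Choose $z \in R(y) \setminus R(x)$; the relation $z\,R\,y$ produces a block $R(w) \in \mathcal{H}$ containing both $z$ and $y$, necessarily distinct from $R(x)$ and of size at least two, so $(\emptyset, R(w))$ is an atom of $\mathit{RS}$ by \eqref{Eq:Atoms} different from $(\emptyset, R(x))$. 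The main obstacle is verifying that this new atom lies below $(R(x)^\DOWN, R(x)^\UP)$; for this I would appeal to Lemma~\ref{Lem:AppByCovering}(a), which via $y \in R(w) \cap R(x)$ yields $R(w) \subseteq R(x)^\UP$. This exhibits two distinct atoms of $\mathit{RS}$ below $(R(x)^\DOWN, R(x)^\UP)$, contradicting (c).
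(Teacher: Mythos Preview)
Your proposal is correct and follows essentially the same cyclic route (a)$\Rightarrow$(b)$\Rightarrow$(c)$\Rightarrow$(a) as the paper's proof, with the same case analyses and contradiction argument. The only notable difference is cosmetic: in (c)$\Rightarrow$(a) you invoke Lemma~\ref{Lem:AppByCovering}(a) directly to get $R(w)\subseteq R(x)^\UP$ from $y\in R(w)\cap R(x)$, whereas the paper first notes $R(w)\subseteq R(y)=\{y\}^\UP\subseteq R(x)^\UP$; both arrive at the same inclusion.
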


\section{Regularity in pseudocomplemented Kleene algebras} 
\label{Sec:Regularity}

In this section we study the structure of completely join-irreducible elements of Kleene algebras defined on algebraic lattices.
Such algebras define pseudocomplemented Kleene algebras and we will prove that such an algebra is regular if and only
if the set $\mathcal{J}$ of the completely join-irreducible elements have at most two levels. The obtained results are
used in defining irredundant coverings and their tolerances.

A lattice $L$ with $0$ is called \emph{atomic}, if for any $x \neq 0$ there exists an atom $a \leq x$. 
Clearly, every atomistic lattice is atomic.

\begin{definition}
The set of completely join-irreducible elements of complete lattice \emph{has at most two levels}, if for any completely
join-irreducible elements $j$ and $k$, $j < k$ implies that $j$ is an atom.
\end{definition}

\begin{remark} \label{Rem:TwoLevel}
Let $L$ be a complete lattice. If $\mathcal{J}$ has at most two levels,
then clearly $\mathcal{J}$ does not contain a chain of three (or more) elements. 

If the lattice $L$ is spatial, then these conditions are equivalent.
Namely, assume that  $\mathcal{J}$ does not contain a chain of three elements.
Let $x < y$ be a maximal chain in $\mathcal{J}$. Suppose by contradiction that $x$ is not
an atom. Then there is $z \in L$ with $0 < z < x$. Since $L$ is spatial, there is  $j \in \mathcal{J}$ such 
that $j \leq z$. Now $j < x < y$ is a chain in $\mathcal{J}$ of three elements, 
a contradiction.
\end{remark}

Let $L$ be a complete lattice. It is well known that if $j$ is a completely join-irreducible element, then $j$ covers 
exactly one element, the \emph{lower cover} of $j$. We denote this element by $j_\prec$. Obviously,
\[
j_{\prec} = \bigvee \{x \in L \mid x<j \}. 
\]
It is clear that $j \in \mathcal{J}$ is an atom if and only if $j_{\prec} = 0$.

\begin{lemma}\label{Lem:Spatial2Level}
Let $L$ be a spatial lattice such that $\mathcal{J}$ has at most two levels. 
\begin{enumerate}[\rm (i)]
 \item If $j \in \mathcal{J} \setminus \mathcal{A}$, then $j_\prec$ is a join of atoms.
 \item The lattice $L$ is atomic.
\end{enumerate}
\end{lemma}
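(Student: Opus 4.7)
The plan is to prove (i) directly from spatiality and the two-levels hypothesis, and then to deduce (ii) from (i) by picking a completely join-irreducible element below a given nonzero element and passing to its lower cover if necessary.

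For (i), I would fix $j \in \mathcal{J} \setminus \mathcal{A}$ and use spatiality applied to $j_\prec$. Spatiality gives
\[
j_\prec = \bigvee \{ k \in \mathcal{J} \mid k \leq j_\prec \}.
\]
For every such $k$ we have $k \leq j_\prec < j$, so $k < j$ is a strict inequality between two completely join-irreducible elements. The assumption that $\mathcal{J}$ has at most two levels then forces $k \in \mathcal{A}$. Hence $j_\prec$ is a join of atoms. The small thing to check is that $j_\prec$ is well-defined and strictly below $j$; this is immediate from the standard fact that every $j \in \mathcal{J}$ has a unique lower cover, namely $j_\prec = \bigvee \{ x \in L \mid x < j \}$.

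For (ii), I would take $x \in L$ with $x \neq 0$. By spatiality there exists some $j \in \mathcal{J}$ with $j \leq x$ (otherwise the spatial representation of $x$ would give $x = 0$). If $j$ is an atom, we are done. Otherwise $j \in \mathcal{J} \setminus \mathcal{A}$, and I would observe that $j_\prec \neq 0$: indeed, since $j$ is not an atom and $j > 0$, there exists $y$ with $0 < y < j$, and then $y \leq j_\prec$ by the definition of $j_\prec$, so $j_\prec > 0$. Now (i) gives $j_\prec = \bigvee A$ for some set of atoms $A$, and since $j_\prec \neq 0$ the set $A$ must be nonempty. Any $a \in A$ is an atom with $a \leq j_\prec < j \leq x$, which proves atomicity.

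I do not foresee a real obstacle here; the only subtlety is to justify cleanly that $j_\prec \neq 0$ when $j$ is not an atom, which reduces to the definitional fact that an atom is precisely a completely join-irreducible element whose lower cover is $0$. Everything else is a routine application of spatiality and the two-levels hypothesis from Remark~\ref{Rem:TwoLevel}.
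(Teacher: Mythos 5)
Your proof is correct and follows essentially the same route as the paper: spatiality applied to $j_\prec$ plus the two-levels hypothesis for (i), and then reduction of atomicity to finding an atom below each completely join-irreducible element via the nonzero join of atoms $j_\prec$ for (ii). The only difference is that you spell out explicitly why $j_\prec \neq 0$ for a non-atom $j$, which the paper leaves implicit.
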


\begin{proof} (i) Let $j \in \mathcal{J} \setminus \mathcal{A}$. Since $L$ is spatial, 
$j_\prec = \bigvee \{ x \in \mathcal{J} \mid x < j\}$. Because $\mathcal{J}$ has at most two levels,
each $x \in \mathcal{J}$ such that $x < j$ is an atom. Therefore, $j_\prec$ is a join of atoms.

(ii) Since $L$ is spatial, we need to show only that there is an atom below each $j \in \mathcal{J}$.
If $j$ is an atom, we have nothing to prove. Now let $j \in \mathcal{J} \setminus \mathcal{A}$. 
Since $j_\prec \neq 0$ is a join of atoms by (i), there must be an atom below $j$,
\end{proof}

\begin{proposition} \label{Prop:Regular}
Let $(L,\vee,\wedge,{\sim}, {^*}, 0, 1)$ be a pseudocomplemented De Morgan algebra defined on an algebraic lattice. 
The following are equivalent:
\begin{enumerate}[\rm (i)]
 \item $L$ is regular;
 \item $\mathcal{J}$ has at most two levels.
\end{enumerate}
\end{proposition}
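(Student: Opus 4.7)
The plan is to prove each implication separately. Throughout, I would exploit that a De~Morgan algebra defined on an algebraic lattice is self-dual and hence doubly algebraic, so by Remark~\ref{Rem:RingOfSets} the underlying lattice $L$ is distributive, completely distributive, and spatial.

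For (i) $\Rightarrow$ (ii), I would argue by contrapositive using Proposition~\ref{Prop:Varlet}. Assume $\mathcal{J}$ has more than two levels; then by Remark~\ref{Rem:TwoLevel}, applicable because $L$ is spatial, $\mathcal{J}$ contains a strict three-element chain $j_0 < j_1 < j_2$. Since $L$ is distributive, each principal filter $[j_i)$ is prime, so $[j_2) \subsetneq [j_1) \subsetneq [j_0)$ is a strict chain of three prime filters, contradicting regularity.

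For (ii) $\Rightarrow$ (i), I would verify condition (M) directly. Suppose $x \neq y$; without loss of generality, spatiality provides some $j \in \mathcal{J}$ with $j \leq x$ and $j \nleq y$. The plan hinges on two separation lemmas. \emph{Lemma (a):} if $j$ is an atom of $L$, then $j \leq z^*$ if and only if $j \nleq z$; this is immediate since $j \wedge z \in \{0, j\}$ and $j \leq z^*$ is equivalent to $j \wedge z = 0$. \emph{Lemma (b):} if $j$ is maximal in $(\mathcal{J}, \leq)$, then $j \leq z^+$ if and only if $j \nleq z$. For the ``if'' direction of (b), I would use that elements of $\mathcal{J}$ are completely join-prime in the completely distributive lattice $L$: given $w$ with $z \vee w = 1$, the relation $j \leq z \vee w$ together with $j \nleq z$ forces $j \leq w$, hence $j \leq \bigwedge \{w : z \vee w = 1\} = z^+$. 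For the ``only if'' direction, I would exhibit the witness $w := \bigvee (\mathcal{J} \setminus \{j\})$: if $j \leq z$, then $z \vee w \geq j \vee w = \bigvee \mathcal{J} = 1$ by spatiality, so $z^+ \leq w$, while complete join-primeness together with the maximality of $j$ gives $j \nleq w$, and therefore $j \nleq z^+$.

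Under the two-level assumption, every $j \in \mathcal{J}$ is either an atom of $L$ or maximal in $\mathcal{J}$, so one of the two lemmas applies to the chosen $j$. In the atom case, lemma (a) gives $j \leq y^*$ and $j \nleq x^*$, so $x^* \neq y^*$; in the non-atom case, lemma (b) analogously gives $x^+ \neq y^+$. Either outcome contradicts the hypothesis of (M), proving $L$ is regular. I expect the most delicate step to be the ``only if'' direction of (b), where the explicit witness $\bigvee(\mathcal{J} \setminus \{j\})$ must be constructed and the interplay of maximality with complete join-primeness is essential.
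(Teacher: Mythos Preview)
Your proof is correct. For (i)$\Rightarrow$(ii) you and the paper give essentially the same argument via Proposition~\ref{Prop:Varlet}, just phrased contrapositively.

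For (ii)$\Rightarrow$(i) your route is genuinely different. The paper proves the implication through the De~Morgan operation: it shows that $x^{*}=y^{*}$ and $x^{+}=y^{+}$ force ${\sim}x={\sim}y$ by checking $j\le{\sim}x\iff j\le{\sim}y$ for every $j\in\mathcal{J}$, and for a non-atom $j$ it passes to the atom $g(j)$ via \eqref{Eq:Connection} and then uses only the pseudocomplement~$^{*}$. You instead bypass~$\sim$ and $g$ entirely, treating $L$ purely as a double $p$-algebra: atoms separate elements via $^{*}$ (your Lemma~(a)) while maximal completely join-irreducibles separate via $^{+}$ (your Lemma~(b)), and the two-level hypothesis guarantees every $j\in\mathcal{J}$ falls into one of these classes. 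Your argument is more self-contained and in effect proves the stronger statement that any double $p$-algebra on a completely distributive spatial lattice whose $\mathcal{J}$ has at most two levels satisfies~(M); the paper's argument, on the other hand, integrates naturally with the $g$-machinery used throughout the rest of the article. The witness $w=\bigvee(\mathcal{J}\setminus\{j\})$ in your Lemma~(b) is the one nontrivial idea, and your justification that $j\nleq w$ via complete join-primeness and maximality is sound.
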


\begin{proof} (i)$\Rightarrow$(ii): Any pseudocomplemented De~Morgan algebra defines a
distributive double $p$-algebra in which the dual pseudocomplement is defined as in \eqref{Eq:Pseudocomplements}.
By Proposition~\ref{Prop:Varlet} there is no 3-element chain in $\mathcal{F}_p$. 
If $j,k \in \mathcal{J}$ with $j < k$, then $[j)$ and $[k)$ are prime filters such that $[k) \subset [j)$. 
By using Fact~\ref{Fact:TwoLevel}, we get that $[j)$ is a maximal prime filter. 
Then $j$ is an atom of $L$ and $\mathcal{J}$ has at most two levels.

(ii)$\Rightarrow$(i): We show that $x^* = y^*$ and $x^+ = y^+$ imply ${\sim} x = {\sim} y$. Notice that ${\sim} x = {\sim} y$ is equivalent to $x = y$.
Because $L$ is spatial by Remark~\ref{Rem:RingOfSets}, we have ${\sim} x = \bigvee \{ j \in \mathcal{J} \mid j \leq {\sim} x\}$ and
${\sim} y = \bigvee \{ j \in \mathcal{J} \mid j \leq {\sim} y\}$. It suffices to show that for any $j \in \mathcal{J}$,
$j \leq {\sim} x \iff j \leq {\sim} y$. Suppose for this that $j \leq {\sim} x$.

If $j \in \mathcal{A}$, then we must have $j \leq {\sim} y$. Otherwise $j \wedge {\sim} y = 0$, which further implies 
$j \leq ({\sim}y)^* = {\sim} y^+ =  {\sim} x^+ = ({\sim}x)^*$ by (2.5). Hence, we would get $j = j \wedge {\sim} x = 0$,
a contradiction.

If $j \in \mathcal{J} \setminus \mathcal{A}$, then there is $a \in \mathcal{A}$ such that $a < j$. This is because $L$ is
atomic by Lemma~\ref{Lem:Spatial2Level}. Because $L$  is by Remark~\ref{Rem:RingOfSets} completely distributive, we may define
the map $g \colon \mathcal{J} \to \mathcal{J}$ as in  \eqref{Eq:Gee}.
We have $g(j) < g(a)$. Since $\mathcal{J}$ has at most two levels, $g(j)$ is an atom. 

Now $j \leq {\sim} x$ yields $g(j) \not \leq x$ by \eqref{Eq:Connection}.
This implies $g(j) \wedge x = 0$ since $g(j)$ is an atom. Thus, $g(j) \leq x^* = y^*$. This gives
$g(j) \wedge y = 0$ and $g(j) \not \leq y$. By using \eqref{Eq:Connection} again, we obtain $j \leq {\sim} y$.

We have now proved that $j \leq {\sim} x$ implies $j \leq {\sim} y$. The converse can be proved symmetrically. Hence,
for any $j \in \mathcal{J}$, $j \leq {\sim} x \iff j \leq {\sim} y$ as required. 
\end{proof}

Let $(L,\vee,\wedge,{\sim},0,1)$ be a Kleene algebra defined on an algebraic lattice. 
By Proposition~\ref{Prop:Regular}, the pseudocomplemented Kleene algebra $\mathbb{L} = (L,\vee,\wedge,{\sim},{^*},0,1)$ is regular  if and only if 
$\mathcal{J}$ has at most two levels. Therefore, if $\mathbb{L}$ is regular, $\mathcal{J}$ can be trivially divided into two disjoint parts: 
the atoms $\mathcal{A}$ and the non-atoms $\mathcal{J} \setminus \mathcal{A}$.
On the other hand, by (J1)--(J3), the map $g \colon \mathcal{J} \to \mathcal{J}$ is an order-isomorphism between $(\mathcal{J},\leq)$ and 
$(\mathcal{J},\geq)$  such that each element $x$ in $\mathcal{J}$ is comparable with $g(x) \in \mathcal{J}$. This means that $\mathcal{J}$ can be divided into three
disjoint parts in terms of $g$: $\{ x \in \mathcal{J} \mid x < g(x)\}$, $\{ x \in \mathcal{J} \mid x = g(x)\}$, and 
$\{ x \in  \mathcal{J} \mid x > g(x)\}$.
We can write the following lemma connecting these two different ways to partition $\mathcal{J}$.

\begin{lemma} \label{Lem:PartitionOfAtoms}
Let $(L,\vee,\wedge,{\sim},{^*},0,1)$  be a regular pseudocomplemented Kleene algebra defined on an algebraic lattice. Then:
\begin{enumerate}[\rm (a)]
 \item $\mathcal{A} = \{x \in \mathcal{J} \mid x \leq g(x)\}$ \ and \ $\mathcal{J} \setminus \mathcal{A} = \{x \in \mathcal{J} \mid x > g(x)\}$.
 \item If $g(x) = x$, then $x$ is incomparable with other elements of $\mathcal{J}$. 
\end{enumerate}
\end{lemma}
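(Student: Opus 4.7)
The proof rests on three ingredients already available: the two-level structure of $\mathcal{J}$ granted by Proposition~\ref{Prop:Regular}, the Kleene comparability (J3), and the fact that (J1)--(J2) make $g$ a strictly order-reversing self-bijection of $\mathcal{J}$ (strict because $g(a)=g(b)$ forces $a=b$ by (J2)). Atomicity of $L$, used once, is supplied by Lemma~\ref{Lem:Spatial2Level}(ii).

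For part (a), (J3) partitions $\mathcal{J}$ into $\{x<g(x)\}\cup\{x=g(x)\}\cup\{x>g(x)\}$, so it suffices to identify $\mathcal{A}$ with $\{x\in\mathcal{J}\mid x\leq g(x)\}$. If $x\in\mathcal{A}$ and (J3) gave $g(x)\leq x$, then the nonzero $g(x)\in\mathcal{J}$ would sit below the atom $x$, forcing $g(x)=x$; either way $x\leq g(x)$. For the reverse inclusion I argue by contraposition. Let $x\in\mathcal{J}\setminus\mathcal{A}$. By Lemma~\ref{Lem:Spatial2Level}(ii) some atom $a$ satisfies $a<x$, and by the forward direction $a\leq g(a)$. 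If in addition $x\leq g(x)$ held, then the strict order-reversal of $g$ applied to $a<x$ would give $g(a)>g(x)\geq x$, producing the three-element chain $a<x<g(a)$ in $\mathcal{J}$ and contradicting the two-level property. Hence $x\not\leq g(x)$, and (J3) forces $x>g(x)$.

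For part (b), suppose $g(x)=x$. Then $x\leq g(x)$, so part~(a) gives $x\in\mathcal{A}$. Let $y\in\mathcal{J}\setminus\{x\}$ be comparable with $x$. The case $y<x$ is impossible because $x$ is an atom while $y\in\mathcal{J}$ is nonzero. In the case $y>x$, the strict order-reversal of $g$ gives $g(y)<g(x)=x$, placing the nonzero element $g(y)\in\mathcal{J}$ strictly below the atom $x$, again impossible. Hence no such $y$ exists.

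The main obstacle is the reverse inclusion of (a): using atomicity to extract an atom $a<x$ and then using the strict order-reversal of $g$ to assemble the forbidden chain $a<x<g(a)$ in $\mathcal{J}$, at which point regularity in the form of the two-level property delivers the contradiction. Everything else reduces to a short direct check built on the same chain-avoidance principle.
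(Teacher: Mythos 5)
Your proof is correct and takes essentially the same route as the paper's: both parts rest on (J1)--(J3), the two-level property supplied by Proposition~\ref{Prop:Regular}, and the extraction of a completely join-irreducible element (in your case an atom, via Lemma~\ref{Lem:Spatial2Level}(ii); in the paper's, via spatiality) strictly below a non-atom to force a forbidden chain. The only difference is organizational: you prove (a) as two inclusions with the reverse one by contraposition, while the paper runs a direct three-way case split on how $x$ compares with $g(x)$.
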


\begin{proof}
(a) Let $x \in \mathcal{J}$. First, suppose $x \nleq g(x)$. Because $x$ and $g(x)$ are comparable, $x > g(x)$. The fact $g(x) \in \mathcal{J}$ means $g(x) \neq 0$. 
Then $0 < g(x) < x$ and $x \notin \mathcal{A}$. On the other hand, if $x < g(x)$, then because $\mathcal{J}$
has at most two levels, we have $x \in \mathcal{A}$. Finally, if $x = g(x)$ and $x \notin \mathcal{A}$, there
exists $y$ such that $0 < y < x$. Because $L$ is spatial, there exists $j \in \mathcal{J}$ such that $j \leq y < x$.
Therefore, $x = g(x) < g(j)$ and this yields that $x$ is an atom, a contradiction. 

(b) Suppose $g(x) = x$. By (a), $x$ is an atom, so there cannot be $y < x$ in $\mathcal{J}$. If $x \leq y$, then
$g(y) \leq g(x) = x$ gives $g(y) = x$, because $x$ is an atom. Hence $x = g(x) = g(g(y)) = y$. So, $x$ is comparable only with itself.
\end{proof}

Let $(L,\vee,\wedge,{\sim},{^*},0,1)$  be a regular pseudocomplemented Kleene algebra defined on an algebraic lattice.
Because $\mathcal{J}$ has at most two levels, $\mathcal{A}$ is the ``lower level'' and $\mathcal{J} \setminus \mathcal{A}$ is the ``upper level''.
For each $x$ in $\mathcal{J} \setminus \mathcal{A}$, the element $g(x)$ is an atom and $g(x) < x$. 
Obviously, $\mathcal{A}$ is an antichain, that is, any two elements in $\mathcal{A}$ are incomparable. This implies that also
$\mathcal{J} \setminus \mathcal{A}$ is an antichain, because if $x$ and $y$ are elements of $\mathcal{J} \setminus \mathcal{A}$ such that
$x < y$, then $g(x)$ and $g(y)$ are atoms and $g(x) > g(y)$ which is not possible. 

We define a relation $\simeq$ on $\mathcal{A}$ by:
\[ x \simeq y \iff x \leq g(y) . \]
Because each atom $x$ is such that $x \leq g(x)$ and $x \leq g(y)$ implies $y = g(g(y)) \leq g(x)$, the relation $\simeq$ is
a tolerance.  For any $x \in \mathcal{A}$, we denote
\begin{equation}\label{Eq:Span}
\langle x \rangle = \{ x \vee y \mid y \simeq x \} \cup \{ g(x)\}. 
\end{equation}

\begin{lemma}\label{Lem:Basis}
Let $x,y \in \mathcal{A}$.
\begin{enumerate}[\rm (a)]
 \item $y \in \langle x \rangle \iff g(y) \in \langle x \rangle \iff x = y$. 
 \item $\langle x \rangle = \{ x \} \iff g(x) = x$. 
 \item $\langle x \rangle \cap \langle y \rangle \neq \emptyset \iff x \simeq y$. 
\end{enumerate}
\end{lemma}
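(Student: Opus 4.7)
The key technical fact I would use repeatedly is that for any $x \in \mathcal{A}$, either $g(x) = x$ or $g(x) \in \mathcal{J} \setminus \mathcal{A}$; indeed, $x \leq g(x)$ by Lemma~\ref{Lem:PartitionOfAtoms}(a), so $g(x) \in \mathcal{A}$ would force $g(x) \leq g(g(x)) = x$ and hence equality. Combined with the fact that every element of $\mathcal{J}$ is join-irreducible in the ordinary sense, this controls most of the case analysis.

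For part (a), I would argue the two equivalences separately. The reverse implications are immediate from \eqref{Eq:Span}: $x = x \vee x \in \langle x \rangle$ since $x \simeq x$, and $g(x) \in \langle x \rangle$ by definition. For $y \in \langle x \rangle \Rightarrow x = y$, I split: if $y = x \vee z$ for some $z \simeq x$, then $x \leq y$, and since both are atoms $x = y$; if instead $y = g(x)$, then $g(x) \in \mathcal{A}$ by hypothesis, so the key fact gives $g(x) = x = y$. For $g(y) \in \langle x \rangle \Rightarrow x = y$, either $g(y) = g(x)$ (apply $g$ to both sides to get $y = x$), or $g(y) = x \vee z$ with $z \simeq x$; in the latter case join-irreducibility of $g(y)$ forces $g(y) \in \{x, z\} \subseteq \mathcal{A}$, so the key fact gives $g(y) = y$, and from $y = x \vee z$ with $y$ an atom I again obtain $x = y$.

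Part (b) is a clean corollary of the definitions. If $\langle x \rangle = \{x\}$, then $g(x) \in \langle x \rangle$ forces $g(x) = x$. Conversely, if $g(x) = x$, then any $z \simeq x$ satisfies $z \leq g(x) = x$, and since $z$ is an atom $z = x$, so $\{x \vee z \mid z \simeq x\} \cup \{g(x)\} = \{x\}$.

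For part (c), the converse direction is easy: if $x \simeq y$, then $x \vee y$ lies in both $\langle x \rangle$ and $\langle y \rangle$ by symmetry of $\simeq$. The forward direction requires splitting any $z \in \langle x \rangle \cap \langle y \rangle$ according to the two possible forms it takes in each span. When $z \in \{g(x), g(y)\}$, arguments similar to those in part~(a) together with join-irreducibility reduce the situation to $x \in \{y, w\}$ (or symmetrically $y \in \{x, u\}$) with $w \simeq y$, each of which yields $x \simeq y$. The main obstacle, and the only step where distributivity of $L$ is really used, is the remaining case $z = x \vee u = y \vee w$ with $u \simeq x$ and $w \simeq y$: here I would exploit that $x$ is an atom, hence join-irreducible, in the distributive lattice $L$ to conclude from $x \leq y \vee w$ that $x \leq y$ or $x \leq w$; since $y$ and $w$ are atoms, this forces $x = y$ or $x = w \simeq y$, and in either subcase $x \simeq y$.
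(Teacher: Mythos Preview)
Your proof is correct and follows essentially the same approach as the paper. The only organizational difference worth noting is in part~(c): the paper first separates the trivial case $x=y$, then uses part~(a) to dismiss in one stroke the possibility that $z\in\mathcal{J}$ (since an atom in $\langle x\rangle\cap\langle y\rangle$ would force $x=z=y$, and an element of the form $g(a)$ can lie only in $\langle a\rangle$), leaving just the case $z=x\vee a=y\vee b$; you instead carry out an explicit case analysis for $z\in\{g(x),g(y)\}$. Both routes arrive at the same distributivity computation, which the paper writes as $x=(y\vee b)\wedge x=(x\wedge y)\vee(x\wedge b)$ and you phrase as ``atoms are join-prime in a distributive lattice''---these are of course the same argument.
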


\begin{proof}
(a) The equivalences follow directly from the definition of $\langle x \rangle$.

(b) Because $g(x) \in \langle x \rangle$ by definition, $\langle x \rangle = \{x\}$ implies $g(x) = x$. On the other hand, 
if $g(x) = x$, then $x \simeq y$ implies $y \leq g(x) = x$. Because $x$ and $y$ are atoms, $x = y$. Thus $\langle x \rangle = \{x\}$.  

(c) If $x = y$, the claim is clear. Let $x \neq y$ and $x \simeq y$. Then $x \vee y \in \langle x \rangle \cap \langle y \rangle$. Conversely, assume 
$z \in \langle x \rangle \cap \langle y \rangle$. It is clear that $z$ is not an atom. Obviously, $z$ cannot be
of the form $g(a)$ for any atom $a$ neither, because $g(a)$ can belong only to $\langle a \rangle$. Thus $z \notin \mathcal{J}$.
Now $z \in \langle x \rangle$ implies $z = x \vee a$ for some some $a \simeq x$ and $z \in \langle y \rangle$ gives $z = y \vee b$ 
for some $b \simeq y$. Then $x = (y \vee b) \wedge x = (x \wedge y) \vee (x \wedge b)$. Because $x \neq y$ are atoms, we have
that $x \wedge y = 0$. Thus $x = x \wedge b$ which gives $x \leq b$. Because also $b$ is an atom, we have $x = b$ and
$x \simeq y$.
\end{proof}

Let us define $U = \bigcup \{ \langle x \rangle \mid x \in \mathcal{A}\}$. It is clear that the family 
$\mathcal{H} = \{ \langle x \rangle \mid x \in \mathcal{A} \}$
is an irredundant covering of $U$, because $x$ and $g(x)$ belong only to $\langle x \rangle$ for any $x \in \mathcal{A}$. 
We denote by $R$ the tolerance induced by $\mathcal{H}$. We have that for each $x \in \mathcal{A}$, the set $\langle x \rangle$
is a block of $R$. Because $R$ is induced by $\mathcal{H}$, we have 
\begin{equation} \label{Eq:R-sets}
  R(x) = \bigcup \{ \langle a \rangle \mid x \in   \langle a \rangle \} 
\end{equation}
for all $x \in U$. Since $\mathcal{A} \subseteq \mathcal{J} \subseteq U$, there are three kinds of elements in $U$. 
The following corollary is obvious by equations \eqref{Eq:Span} and \eqref{Eq:R-sets}.

\begin{corollary}\label{Cor:Neighbourhodds} Let $x,y,z$ be elements of $U = \bigcup \{ \langle a \rangle \mid a \in \mathcal{A}\}$.
\begin{enumerate}[\rm (a)]
 \item If $x \in \mathcal{A}$, then $R(x) = \langle x \rangle$.
 \item If $y \in \mathcal{J} \setminus \mathcal{A}$, then $y = g(a)$ for some $a \in \mathcal{A}$ and $R(y) = R(a) = \langle a \rangle$.
 \item If $z \in U \setminus \mathcal{J}$, then $R(z) = \langle a \rangle \cup \langle b \rangle$ for some distinct $a,b \in \mathcal{A}$ such
 that $z = a \vee b$.
\end{enumerate}
\end{corollary}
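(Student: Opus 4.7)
The claim is essentially a direct unpacking of the definitions together with Lemma~\ref{Lem:Basis}, so my plan is to split the argument by the three cases and in each case pin down exactly which covering sets $\langle a\rangle$ contain the element in question, then read off $R$ from \eqref{Eq:R-sets}.

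For part (a), I would start from \eqref{Eq:R-sets}: $R(x)=\bigcup\{\langle a\rangle\mid x\in\langle a\rangle\}$. Since $x\in\mathcal{A}$ and $x\simeq x$, we have $x=x\vee x\in\langle x\rangle$. Lemma~\ref{Lem:Basis}(a) says that for atoms $a$, membership $x\in\langle a\rangle$ forces $x=a$; hence only $\langle x\rangle$ appears in the union, giving $R(x)=\langle x\rangle$.

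For part (b), Lemma~\ref{Lem:PartitionOfAtoms}(a) tells me that every $y\in\mathcal{J}\setminus\mathcal{A}$ satisfies $y>g(y)$ with $g(y)\in\mathcal{A}$; setting $a:=g(y)$ gives $y=g(g(y))=g(a)$. By the definition \eqref{Eq:Span}, $g(a)\in\langle a\rangle$, so $y$ lies in at least one covering set. Lemma~\ref{Lem:Basis}(a) again rules out $g(a)\in\langle b\rangle$ for any atom $b\neq a$, so $\langle a\rangle$ is the only covering set containing $y$, yielding $R(y)=\langle a\rangle=R(a)$ (the last equality by part (a)).

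For part (c), fix $z\in U\setminus\mathcal{J}$. Pick any $a\in\mathcal{A}$ with $z\in\langle a\rangle$; by \eqref{Eq:Span}, either $z=g(a)\in\mathcal{J}$ (excluded) or $z=a\vee b$ for some atom $b\simeq a$; if $b=a$ then $z=a\in\mathcal{J}$, also excluded, so $a\ne b$. Then $z\in\langle a\rangle\cap\langle b\rangle$. To show no other covering set contributes, suppose $z\in\langle c\rangle$ for some $c\in\mathcal{A}$. The same dichotomy forces $z=c\vee d$ with $d\in\mathcal{A}$, $d\ne c$, $d\simeq c$. I now exploit distributivity in $L$: from $a\le z=c\vee d$ we get $a=(a\wedge c)\vee(a\wedge d)$, and since $a,c,d$ are atoms the meets are $0$ unless equal; as $a\ne 0$ this forces $a\in\{c,d\}$, and symmetrically $b\in\{c,d\}$. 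With $a\ne b$ and $c\ne d$, we conclude $\{c,d\}=\{a,b\}$, so $\langle c\rangle\subseteq\langle a\rangle\cup\langle b\rangle$. Substituting into \eqref{Eq:R-sets} yields $R(z)=\langle a\rangle\cup\langle b\rangle$.

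The only step with any real content is the uniqueness argument in (c); parts (a) and (b) are just Lemma~\ref{Lem:Basis}(a) applied to \eqref{Eq:R-sets}. I expect no genuine obstacle, since the required distributivity and the atomic-disjointness of distinct atoms are both available in the pseudocomplemented Kleene algebra.
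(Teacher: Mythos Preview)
Your proposal is correct and is precisely the detailed unpacking the paper intends: the paper gives no proof at all, merely declaring the corollary ``obvious by equations \eqref{Eq:Span} and \eqref{Eq:R-sets}''. Your use of Lemma~\ref{Lem:Basis}(a) for parts (a) and (b), and of distributivity plus the atom-meet dichotomy for the uniqueness in part (c), is exactly the content behind that ``obvious'' (and the part (c) argument mirrors the proof of Lemma~\ref{Lem:Basis}(c) almost verbatim).
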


\noindent%
By applying the conditions of Corollary~\ref{Cor:Neighbourhodds} in Lemma~\ref{Lem:AppByCovering}, we can write for any $x \in \mathcal{J}$, 
\begin{equation}\label{Eq:UpperLowerGee}
R(x)^\DOWN = \{x, g(x)\} \text{ \ and \ } R(x)^\UP = \bigcup\{ \langle y \rangle \mid R(x) \cap  \langle y \rangle  \neq \emptyset \}.
\end{equation}
Lemma~\ref{Lem:Basis}(c) gives that for every $x \in \mathcal{A}$,
\begin{equation}\label{Eq:UpperGee}
R(x)^\UP = R(g(x))^\UP = \bigcup\{ \langle y \rangle \mid \langle x \rangle \cap \langle y \rangle \neq \emptyset \}
            = \bigcup\{ \langle y \rangle \mid x \simeq y \}.
\end{equation}

\section{Representation theorem} \label{Sec:Representation}

Let $\mathbb{L} = (L,\vee,\wedge,{\sim},{^*},0,1)$  be a regular pseudocomplemented Kleene algebra such that its underlying lattice is algebraic.
As in Section~\ref{Sec:Regularity}, we denote $\mathcal{H} = \{ \langle x \rangle \mid x \in \mathcal{A}\}$ and $U = \bigcup  \mathcal{H}$.
The tolerance $R$ is induced by the irredundant covering $\mathcal{H}$ of $U$ and the corresponding rough set lattice is denoted by $\mathit{RS}$. 
Let us agree that we denote $\mathcal{J}(L)$ simply by $\mathcal{J}$ and $\mathcal{J}(\mathit{RS})$ denotes the completely join-irreducible 
elements of $\mathit{RS}$.

For any $x \in \mathcal{J}$, we define
\[ \varphi(x) = \left \{ 
\begin{array}{ll}
 (\emptyset, R(x) ) & \text{if $x < g(x)$,} \\[1mm]
 (R(x)^\DOWN, R(x)^\UP) & \text{otherwise.}
\end{array} \right .
\]
If $x \in \mathcal{J}$, then $R(x)$ is a block. Indeed, if $x \in \mathcal{A}$, then $R(x) = \langle x \rangle$ is a block, 
and if $x \in \mathcal{J} \setminus \mathcal{A}$, then $g(x)$ is an atom and $R(x) = R(g(x)) = \langle g(x) \rangle$ is a block. 
Thus $(R(x)^\DOWN, R(x)^\UP) \in \mathcal{J}(RS)$ for every $x \in \mathcal{J}$ by Proposition~\ref{Prop:JoinIrreducibles}. Furthermore, if $x < g(x)$, then
$g(x) \in \langle x \rangle = R(x)$ and $|R(x)| \geq 2$. Therefore, $(\emptyset, R(x) ) \in \mathcal{J}(RS)$.
This means that the map $\varphi \colon \mathcal{J} \to  \mathcal{J}(RS)$ is well defined. Note also that if $x = g(x)$, 
then $x \in \mathcal{A}$ and $R(x) = \langle x \rangle = \{x\}$. This gives that $R(x)^\DOWN = R(x)^\UP = \{x\}$ and
$\varphi(x) = (\{x\},\{x\})$.

\begin{lemma}\label{Lem:order_embedding} 
The map $\varphi \colon \mathcal{J} \to \mathcal{J}(RS)$ is an order-isomorphism.
\end{lemma}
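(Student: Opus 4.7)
My plan is to verify the three conditions (bijection, order-preserving, order-reflecting) by a single case split on the partition of $\mathcal{J}$ afforded by Lemma~\ref{Lem:PartitionOfAtoms}, matched against the stratification of $\mathcal{J}(\mathit{RS})$ from Proposition~\ref{Prop:JoinIrreducibles}. I will split $\mathcal{J}$ into the $g$-fixed atoms $\{a \in \mathcal{A} \mid g(a)=a\}$, the remaining atoms $\{a \in \mathcal{A} \mid a<g(a)\}$, and the non-atoms $\mathcal{J}\setminus\mathcal{A}$, and check that $\varphi$ sends these three pieces bijectively onto the three strata of $\mathcal{J}(\mathit{RS})$: on a $g$-fixed atom $a$ we have $R(a)=\langle a\rangle=\{a\}$, so $\varphi(a)=(\{a\},\{a\})$; on $a\in\mathcal{A}$ with $a<g(a)$ we have $\varphi(a)=(\emptyset,\langle a\rangle)$; and on $x\in\mathcal{J}\setminus\mathcal{A}$ we have $\varphi(x)=(R(x)^\DOWN,R(x)^\UP)$ with $R(x)=\langle g(x)\rangle$ by Corollary~\ref{Cor:Neighbourhodds}(b).

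Surjectivity is then transparent: by Proposition~\ref{Prop:JoinIrreducibles} combined with Corollary~\ref{Cor:Neighbourhodds}(a), every element of $\mathcal{J}(\mathit{RS})$ is either $(\emptyset,\langle a\rangle)$ with $a\in\mathcal{A}$ and $|\langle a\rangle|\geq 2$ (hit by $\varphi(a)$), or $(R(a)^\DOWN,R(a)^\UP)$ with $a\in\mathcal{A}$ (hit by $\varphi(a)$ if $g(a)=a$ and by $\varphi(g(a))$ otherwise). For injectivity, the key input is \eqref{Eq:UpperLowerGee}, which gives $R(x)^\DOWN=\{x,g(x)\}$ for every $x\in\mathcal{J}$: whenever the first coordinate of $\varphi(x)$ is non-empty it already determines the pair $\{x,g(x)\}$, from which $x$ is recovered by the partition of $\mathcal{J}$; when the first coordinate is empty, $x$ is the unique atom sitting inside its second coordinate $\langle x\rangle$ by Lemma~\ref{Lem:Basis}(a).

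For order-preservation, recall from Section~\ref{Sec:Regularity} that both $\mathcal{A}$ and $\mathcal{J}\setminus\mathcal{A}$ are antichains, so the only non-trivial comparison $x<y$ to check has $x\in\mathcal{A}$ and $y\in\mathcal{J}\setminus\mathcal{A}$. Lemma~\ref{Lem:PartitionOfAtoms}(b) rules out $g(x)=x$, so $\varphi(x)=(\emptyset,\langle x\rangle)$. From $x\leq y=g(g(y))$ I read off $x\simeq g(y)$, and then \eqref{Eq:UpperGee} gives $\langle x\rangle\subseteq R(g(y))^\UP=R(y)^\UP$, which yields $\varphi(x)\leq\varphi(y)$.

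The main obstacle will be the order-reflecting step, which I will handle by a case split on the types of $\varphi(x)$ and $\varphi(y)$. Comparing first coordinates via \eqref{Eq:UpperLowerGee} immediately rules out the incompatible mixed case (a Type~I above a Type~II), and in the same-type cases yields $x=y$: for two Type~II rough sets this is Lemma~\ref{Lem:Basis}(a), and for two Type~I rough sets this follows from the equality $\{x,g(x)\}=\{y,g(y)\}$ together with Lemma~\ref{Lem:PartitionOfAtoms}(a)--(b). The remaining genuinely mixed case is $\varphi(x)=(\emptyset,\langle x\rangle)$ with $\varphi(y)=(R(y)^\DOWN,R(y)^\UP)$; here the inclusion of second coordinates, combined with \eqref{Eq:UpperGee}, forces $x\in\langle z\rangle$ for some $z\simeq g(y)$, so Lemma~\ref{Lem:Basis}(a) gives $z=x$, whence $x\simeq g(y)$ and $x\leq g(g(y))=y$. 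This finishes the plan.
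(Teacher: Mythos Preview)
Your proof is correct and follows essentially the same route as the paper's: both arguments use the partition of $\mathcal{J}$ via Lemma~\ref{Lem:PartitionOfAtoms}, invoke Lemma~\ref{Lem:Basis}(a) and equations \eqref{Eq:UpperLowerGee}--\eqref{Eq:UpperGee} for the order-embedding, and establish surjectivity by matching against the strata of $\mathcal{J}(\mathit{RS})$ from Proposition~\ref{Prop:JoinIrreducibles}. The only differences are organizational---you separate injectivity and surjectivity from the order conditions, and you phrase the order-reflecting case split in terms of the type of $\varphi(x),\varphi(y)$ rather than the sign of $x$ versus $g(x)$---but this is the same case analysis.
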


\begin{proof}
First, we show that $x \leq y$ implies $\varphi(x) \leq \varphi(y)$. If $x = y$, then trivially $\varphi(x) = \varphi(y)$.
If $x < y$, then $x \in \mathcal{A}$, $y \in \mathcal{J} \setminus \mathcal{A}$, and
$g(y) \in \mathcal{A}$. Therefore, $x < g(x)$ and $g(y) < y$ which imply
$\varphi(x) = ( \emptyset, R(x) )$ and $\varphi(y) =  (R(y)^\DOWN, R(y)^\UP)$.
By \eqref{Eq:UpperGee}, $R(y)^\UP = R(g(y))^\UP = \bigcup\{ \langle z \rangle \mid z \simeq g(y) \}$.
Since $x \leq y = g(g(y))$, we have $x \simeq g(y)$ and 
$R(x) = \langle x \rangle \subseteq  \bigcup \{ \langle z \rangle \mid z \simeq g(y) \} = R(y)^\UP$, implying $\varphi(x) \leq \varphi(y)$.
 
\medskip%

Secondly, we show that $\varphi(x) \leq \varphi(y)$ implies $x \leq y$. We begin by noting that if $\varphi(x) \leq \varphi(y)$,
then $x = g(x)$ and $y = g(y)$ are equivalent, and they imply $x = y$. To see this, suppose that $x = g(x)$. 
Then $\varphi(x) = (\{x\},\{x\})$. Now $\varphi(y) = (\emptyset, \langle y \rangle)$ is not possible, because we have
assumed that $\varphi(x) \leq \varphi(y)$. Therefore, we must have that $\varphi(y) = ( R(y)^\DOWN,R(y)^\UP)$.
This gives $x \in R(y)^\DOWN = \{y,g(y)\}$ by \eqref{Eq:UpperLowerGee}, so that $x = y$ or $x = g(y)$. Also the second equality gives
$x = g(x) = g(g(y)) = y$. Analogously, $g(y) = y$ means $\varphi(y) = (\{y\},\{y\})$. If $\varphi(x) = (\emptyset, \langle x \rangle)$,
then $x \in \langle x \rangle \subseteq \{y\}$ implies $x = y$ and $g(x) = g(y) = y = x$. If $\varphi(x) = (R(x)^\DOWN, R(x)^\UP)$, then
$R(x)^\DOWN = \{x,g(x)\} \subseteq \{y\}$ gives $x = g(x) = y$.

Therefore, we may assume $x \neq g(x)$ and $y \neq g(y)$. We divide the rest of the proof into four cases:
\begin{enumerate}[(i)]
 \item $x < g(x)$ and $y < g(y)$;
 \item $x < g(x)$ and $y > g(y)$;
 \item $x > g(x)$ and $y < g(y)$;
 \item $x > g(x)$ and $y > g(y)$.
\end{enumerate}

(i) Let $x < g(x)$ and $y < g(y)$. Then $x,y \in \mathcal{A}$ which yields $\varphi(x) = (\emptyset, R(x) ) = (\emptyset, \langle x \rangle)$ 
and $\varphi(y) = (\emptyset, R(y) ) = (\emptyset, \langle y \rangle)$.
By $\varphi(x) \leq \varphi(y)$ we get $x \in \langle x \rangle \subseteq  \langle y \rangle$, which is possible only if $x = y$ by Lemma~\ref{Lem:Basis}.

(ii) Suppose that $x < g(x)$ and $y > g(y)$. Hence $x$ and $g(y)$ are atoms. We have that
$\varphi(x) = (\emptyset, R(x)) = (\emptyset,\langle x \rangle)$ and $\varphi(y) = \{ R(y)^\DOWN, R(y)^\UP\}$.
Now $R(y)^\UP = R(g(y))^\UP = \bigcup \{ \langle z \rangle \mid z \simeq g(y) \}$.
Because $x \in \langle x \rangle \subseteq  \bigcup \{ \langle z \rangle \mid z \simeq g(y) \}$, we get
$x \in \langle z \rangle$ for some $z \simeq g(y)$. Then $z \leq g(g(y)) = y$. Since $x$ and $z$ are atoms, we obtain
$x = z$, and therefore $x \leq y$.

(iii) If $x > g(x)$ and $y < g(y)$, then $y \in \mathcal{A}$ and $x \in \mathcal{J} \setminus \mathcal{A}$.
Therefore, $\varphi(x) = (R(x)^\DOWN,R(x)^\UP)$ and $\varphi(y) = (\emptyset, R(y) )$.
Now $R(x)^\DOWN = \{x, g(x)\} \neq \emptyset$, contradicting $\varphi(x) \leq \varphi(y)$. 
Hence this case is not possible.

(iv) Assume $x > g(x)$ and $y >g(y)$. Then $g(x)$ and $g(y)$ are atoms and $x,y \in \mathcal{J} \setminus \mathcal{A}$.
We have that $\varphi(x) = ( \{x,g(x)\}, R(x)^\UP)$ and
$\varphi(x) = ( \{y,g(y)\},  R(y)^\DOWN\}$. By $\varphi(x) \leq \varphi(y)$ we have $\{x,g(x)\} \subseteq \{y,g(y)\}$.
If $x = y$, then we are done, and if $x = g(y)$, then $x < y$ because $g(y) < y$. 

\medskip%
Finally, we show that the map $\varphi$ is onto $\mathcal{J}(\textit{RS})$. Because $R$ is induced by the irredundant covering
$\{ \langle x \rangle \mid x \in \mathcal{A} \}$, there are two kinds of elements in $\mathcal{J}(\textit{RS})$: 
for each $x \in \mathcal{A}$ there is the rough set $(\langle x \rangle^\DOWN, \langle x \rangle^\UP)$, and for each
$x \in \mathcal{A}$ such that $|R(x)| = |\langle x \rangle| \geq 2$ there exists $(\emptyset, \langle x \rangle)$ in $\mathcal{J}(\textit{RS})$. So,
if $j = (\emptyset, \langle x \rangle)$ then $\varphi(x) = j$. Suppose $j = (\langle x \rangle^\DOWN, \langle x \rangle^\UP)$ for some $x \in \mathcal{A}$.
Because $x \in \mathcal{A}$, then $x = g(x)$ or $x < g(x)$. If $x = g(x)$, then $\langle x \rangle = \{x\}$, $j = (\{x\},\{x\})$,
and $\varphi(x) = j$. If $x < g(x)$, then $\varphi(g(x)) = (R(g(x))^\DOWN, R(g(x))^\UP) = (R(x)^\DOWN, R(x)^\UP) = j$.
\end{proof}

\begin{lemma}\label{Lem:CompatibleGee}
For all $x \in \mathcal{J}$, $\varphi(g(x)) = g(\varphi(x))$.
\end{lemma}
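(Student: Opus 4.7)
The plan is to do a case analysis on the position of $x$ relative to $g(x)$, using Lemma~\ref{Lem:PartitionOfAtoms} and the computation of $g$ on $\mathcal{J}(\mathit{RS})$ provided by Lemma~\ref{Lem:Brothers}. By Lemma~\ref{Lem:PartitionOfAtoms}(a), every $x \in \mathcal{J}$ falls into exactly one of the three cases: $x < g(x)$ (so $x$ is an atom and $g(x)$ is not), $x > g(x)$ (symmetric), or $x = g(x)$ (so $x$ is an atom with $\langle x \rangle = \{x\}$ by Lemma~\ref{Lem:Basis}(b)). In each case, I would compute both $\varphi(g(x))$ directly from the definition of $\varphi$ and $g(\varphi(x))$ using Lemma~\ref{Lem:Brothers}, and observe that they coincide.

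In the case $x < g(x)$, the atom $x$ satisfies $\varphi(x) = (\emptyset, R(x))$, where $R(x) = \langle x \rangle$ and $|R(x)| \geq 2$ because $g(x) \in \langle x \rangle \setminus \{x\}$; hence Lemma~\ref{Lem:Brothers}(a) gives $g(\varphi(x)) = (R(x)^\DOWN, R(x)^\UP)$. On the other side, $g(x)$ lies in $\mathcal{J}\setminus\mathcal{A}$ and satisfies $g(x) > g(g(x)) = x$, so $\varphi(g(x)) = (R(g(x))^\DOWN, R(g(x))^\UP)$, and by Corollary~\ref{Cor:Neighbourhodds}(b) we have $R(g(x)) = R(x)$. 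Thus the two sides agree. The case $x > g(x)$ is the mirror image: $\varphi(x) = (R(x)^\DOWN, R(x)^\UP)$ and Lemma~\ref{Lem:Brothers}(a) yields $g(\varphi(x)) = (\emptyset, R(x))$, while $g(x) \in \mathcal{A}$ with $g(x) < g(g(x)) = x$ gives $\varphi(g(x)) = (\emptyset, R(g(x))) = (\emptyset, R(x))$ again via Corollary~\ref{Cor:Neighbourhodds}(b).

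The remaining case $x = g(x)$ is straightforward: here $R(x) = \langle x \rangle = \{x\}$, so $\varphi(x) = (\{x\},\{x\})$, and Lemma~\ref{Lem:Brothers}(b) gives $g((\{x\},\{x\})) = (\{x\},\{x\}) = \varphi(x) = \varphi(g(x))$.

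There is no real obstacle here; the lemma is essentially a bookkeeping check that the two definitions of $g$ (one on $\mathcal{J}$ via \eqref{Eq:Gee} in $L$, the other on $\mathcal{J}(\mathit{RS})$ computed in Lemma~\ref{Lem:Brothers}) match under the case split of $\varphi$. The only subtlety worth flagging is making sure that whenever the non-atom branch of $\varphi$ applies to a non-atom $y = g(x)$, the block $R(y)$ equals $\langle x \rangle$, which is exactly the content of Corollary~\ref{Cor:Neighbourhodds}(b) and is what lets the two computations meet in the middle.
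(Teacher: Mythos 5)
Your proposal is correct and follows essentially the same route as the paper: a three-way case split on $x < g(x)$, $x = g(x)$, $x > g(x)$, computing $\varphi(g(x))$ from the definition of $\varphi$ and $g(\varphi(x))$ via Lemma~\ref{Lem:Brothers}, with $R(g(x)) = R(x)$ (Corollary~\ref{Cor:Neighbourhodds}) closing the gap. The paper's proof is the same bookkeeping check, only stated slightly more tersely.
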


\begin{proof}
Because $\mathbb{L}$ is a completely distributive Kleene algebra,  there are by (J1)--(J3) three kinds of elements $x$ in $\mathcal{J}$
with respect to the map $g$: (i) $x < g(x)$, (ii) $x = g(x)$, and (iii) $x > g(x)$. Based on this, we divide the proof into three cases.

(i) If $x < g(x)$, then $\varphi(x) = (\emptyset, R(x))$ and $|R(x)| = |R(g(x))| \geq 2$. Therefore,
\[ \varphi(g(x)) = (R(g(x))^\DOWN, R(g(x))^\UP) = (R(x)^\DOWN, R(x)^\UP) = g((\emptyset, R(x))) = g(\varphi(x)). \]

(ii) If $x = g(x)$, then $R(x) = \{x\}$ and
\[ \varphi(g(x)) = \varphi(x) = ( \{x\},\{x\} ) = g(\varphi(x)) . \]

(iii) If $x > g(x)$, then $g(x) < g(g(x))$, and as in (i), $|R(x)| = |R(g(x))| \geq 2$. Thus
\[ 
\varphi(g(x)) = (\emptyset, R(g(x))) = (\emptyset, R(x)) = g(( R(x)^\DOWN, R(x)^\UP)) = g( \varphi(x)). \qedhere
\]
\end{proof}

We proved in \cite[Corollary 2.4]{JarRad11} that if $\mathbb{L} = (L,\vee,\wedge, {\sim},0,1)$ and 
$\mathbb{K} = (K,\vee,\wedge, {\sim},0,1)$ are two De~Morgan algebras defined on algebraic lattices
and $\varphi \colon \mathcal{J}(L) \to \mathcal{J}(K)$ is an order-isomorphism such that
$\varphi(g(j)) = g(\varphi(j))$ for all $j \in \mathcal{J}(L)$, then the algebras $\mathbb{L}$ and
$\mathbb{K}$ are isomorphic. By Lemmas~\ref{Lem:order_embedding} and \ref{Lem:CompatibleGee}, 
we can establish the following representation result.

\begin{theorem} \label{The:Representation}
Let $\mathbb{L} = (L,\vee,\wedge,{\sim}, {^*}, 0,1)$ be a regular pseudocomplemented Kleene algebra defined on an algebraic lattice. There
exists a set $U$ and a tolerance $R$ on $U$ such that $\mathbb{L} \cong \mathbb{RS}$.
\end{theorem}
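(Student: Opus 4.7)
The plan is to simply assemble the machinery developed in Sections~\ref{Sec:Regularity} and \ref{Sec:Representation}: the data $(U, R)$ has already been manufactured from $\mathbb{L}$, so only the verification that this choice works remains.

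First, I take $U = \bigcup \mathcal{H}$ where $\mathcal{H} = \{\langle x\rangle \mid x \in \mathcal{A}\}$ is the irredundant covering extracted from the atoms of $\mathbb{L}$ in Section~\ref{Sec:Regularity}, and I let $R = R_{\mathcal{H}}$ be the induced tolerance. By Proposition~\ref{Prop:Regularity}, the algebra $\mathbb{RS} = (\mathit{RS},\vee,\wedge,{\sim},{^*},(\emptyset,\emptyset),(U,U))$ is then a regular pseudocomplemented Kleene algebra whose underlying lattice is algebraic and completely distributive. Both $\mathbb{L}$ and $\mathbb{RS}$ are therefore spatial De~Morgan algebras on algebraic lattices, so each is fully determined, up to isomorphism, by its poset of completely join-irreducible elements together with the map $g$.

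Second, I apply Lemmas~\ref{Lem:order_embedding} and~\ref{Lem:CompatibleGee}: the explicit map $\varphi \colon \mathcal{J}(L) \to \mathcal{J}(\mathit{RS})$ defined in Section~\ref{Sec:Representation} is an order-isomorphism satisfying $\varphi(g(x)) = g(\varphi(x))$ for every $x \in \mathcal{J}(L)$. Invoking \cite[Corollary~2.4]{JarRad11}, as cited by the authors, lifts $\varphi$ to a De~Morgan algebra isomorphism $\Phi \colon \mathbb{L} \to \mathbb{RS}$ by $\Phi(a) = \bigvee\{\varphi(j) \mid j \in \mathcal{J}(L),\, j \leq a\}$. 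Since $\Phi$ is in particular a lattice isomorphism between bounded lattices in which pseudocomplements exist, and since pseudocomplements are uniquely determined by the lattice order via $a^* = \bigvee\{z \mid a \wedge z = 0\}$, the map $\Phi$ automatically commutes with $^*$. Hence $\Phi$ is an isomorphism of pseudocomplemented Kleene algebras, which is the claim.

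The genuinely hard part is not in this assembly step but in the preceding structural work. The crux is Lemma~\ref{Lem:order_embedding}, whose reverse implication $\varphi(x) \leq \varphi(y) \Rightarrow x \leq y$ requires a careful case split on whether $x$ and $y$ lie in $\mathcal{A}$ or in $\mathcal{J} \setminus \mathcal{A}$ and a translation through the identities $R(x)^\DOWN = \{x, g(x)\}$ and $R(x)^\UP = \bigcup\{\langle z\rangle \mid z \simeq g(x)\}$ from \eqref{Eq:UpperLowerGee} and \eqref{Eq:UpperGee}. This in turn depends on the two-level structure of $\mathcal{J}$ guaranteed by regularity (Proposition~\ref{Prop:Regular}) and on the combinatorial properties of $\simeq$ and $\langle\cdot\rangle$ in Lemma~\ref{Lem:Basis}, which ensure that $\mathcal{H}$ is genuinely irredundant so that Proposition~\ref{Prop:JoinIrreducibles} is applicable. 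Once those pieces are in hand, the theorem itself is a short appeal to the De~Morgan representation principle.
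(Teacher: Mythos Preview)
Your proposal is correct and follows essentially the same route as the paper: build $U$ and $R$ from the covering $\mathcal{H}=\{\langle x\rangle\mid x\in\mathcal{A}\}$, apply Lemmas~\ref{Lem:order_embedding} and~\ref{Lem:CompatibleGee}, and then invoke \cite[Corollary~2.4]{JarRad11} to lift $\varphi$ to a De~Morgan isomorphism. Your explicit remark that the pseudocomplement is preserved because $^*$ is determined by the lattice order is a welcome clarification the paper leaves implicit.
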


\begin{example}
Let us consider the Kleene algebra $\mathbb{L}$ depicted in Figure~\ref{Fig:Figure1}.
\begin{figure}[ht]
 \centering
\includegraphics[width=96mm]{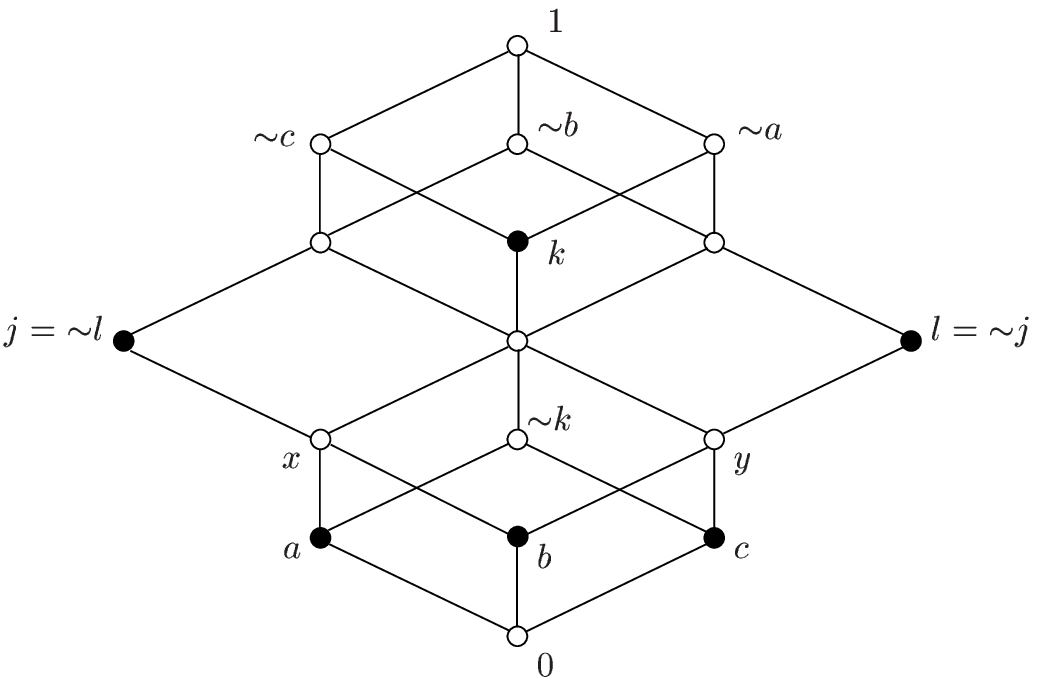}
\caption{\label{Fig:Figure1}}
\end{figure}

The (completely) join-irreducible elements are marked with filled circles. Now $\mathcal{A} = \{a,b,c\}$ and
$\mathcal{J} \setminus \mathcal{A} = \{j,k,l\}$. Note that the elements ${\sim} x$ are denoted for each $x \in \mathcal{J}$.
Because the lattice $L$ is finite, it is algebraic. It is easy to observe that
$\mathcal{J}$ has at most two levels. Therefore, the pseudocomplemented Kleene algebra $\mathbb{L}$ is regular. By Theorem~\ref{The:Representation},
there exists a set $U$ and a tolerance $R$ on $U$ such that the Kleene algebra $\mathbb{RS}$ determined by $R$ is isomorphic to $\mathbb{L}$. 
Next, we will illustrate this construction.

Now $g(a) = j$, $g(b) = k$, and $g(c) = l$. This means that the tolerance $\simeq$ on $\mathcal{A}$ is such that 
$a \simeq b$ and $b \simeq c$. For simplicity, we denote $a \vee b$ by $x$ and $b \vee c$ by $y$.  The sets
\[ \langle a \rangle = \{a,j,x\}, \langle b \rangle = \{b,k,x,y\}, \langle c \rangle = \{c,l,y\}  \]
form an irredundant covering of $U = \{a,b,c,j,k,l,x,y\}$ inducing $R$. We have
\begin{align*}
 R(a) = R(j) = \langle a \rangle;                  &&  R(b) = R(k) = \langle b \rangle; && R(c) = R(j) = \langle c \rangle ; \\
 R(x) =  \langle a \rangle \cup  \langle b \rangle; &&  R(y) =  \langle b \rangle \cup  \langle c \rangle . &&
\end{align*}

The rough set system $\mathit{RS}$ induced by the tolerance $R$ is depicted in Figure~\ref{Fig:Figure3}. The original Kleene algebra
$\mathbb{L}$ is isomorphic to the Kleene algebra $\mathbb{RS}$.
\begin{figure}[ht]
 \centering
\includegraphics[width=100mm]{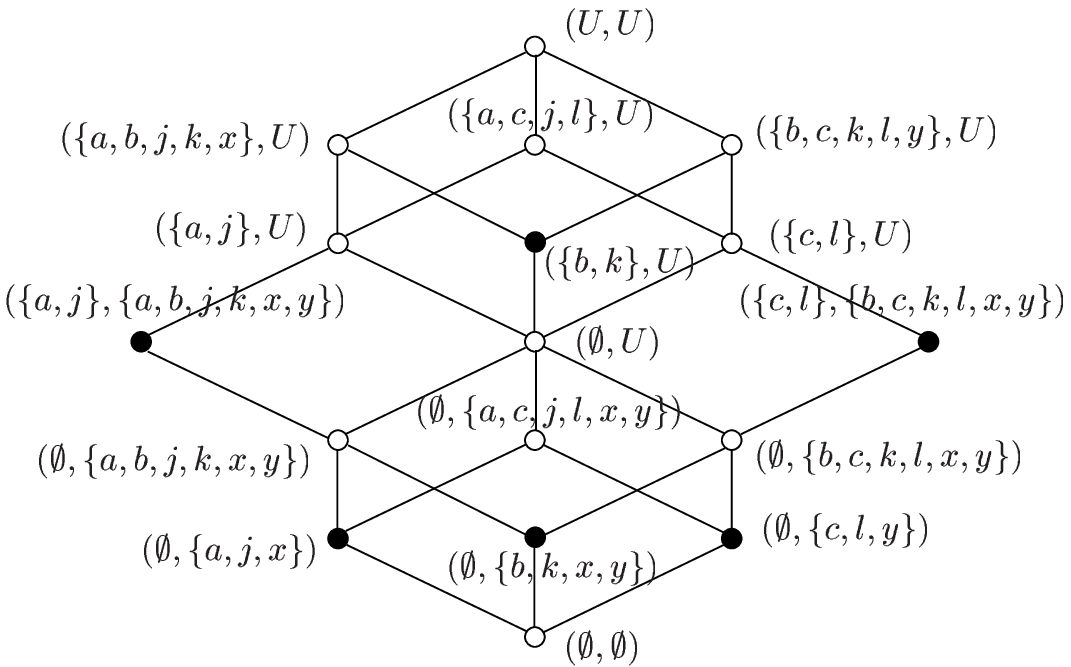}
\caption{\label{Fig:Figure3}}
\end{figure}
\end{example}

\section{Some concluding remarks} \label{Sec:Remarks}

Rough set lattices determined by quasiorders and by tolerances induced by irredundant coverings form Kleene algebras such that their underlying lattices are algebraic.
Their set of completely join-irreducible elements $\mathcal{J}$ are such that each $x \in \mathcal{J}$ is comparable with $g(x) \in \mathcal{J}$.
In this work we have shown that rough set algebras determined by tolerances induced by irredundant coverings are such that
$\mathcal{J}$ has at most two levels, and we proved that in case of pseudocomplemented De~Morgan algebras defined on
algebraic lattices these are exactly the (congruence-)regular ones. In case of an equivalence $E$, the set $\mathcal{J}$ of $\mathit{RS}$
is such that each $x \in \mathcal{J}$ is comparable \textit{only} with $g(x)$. This means that $\mathit{RS}$ is isomorphic to
$\mathbf{2}^I \times \mathbf{3}^K$, where $I$ is the set of singleton $E$-classes and $K$ is the set of $E$-classes having at least two elements.
The regular distributive double pseudocomplemented lattice $\mathit{RS}$ defined by an equivalence is in fact a regular double Stone algebra, 
and each regular double Stone algebra isomorphic to a product of chains $\mathbf{2}$'s and $\mathbf{3}$'s defines an equivalence $E$
such that the rough set algebra $\mathbb{RS}$ is isomorphic to the original regular double Stone algebra.

Obviously, we may divide the class of Kleene algebras defined on algebraic lattices into two classes: the ones in which $\mathcal{J}$ has at most
two levels, and those whose $\mathcal{J}$ has at least three levels. As we have shown in this work, if $\mathcal{J}$ has at most two levels,
then these algebras can be represented by tolerances which are induced by an irredundant covering. On the other hand, consider a Kleene algebra $L$ defined 
on an algebraic lattice such that there are at least three levels in $\mathcal{J}$. Now we may apply the results of \cite{JarRad11}, where we proved 
that the ones corresponding to rough sets determined by quasiorders are exactly those which satisfy the \emph{interpolation property}: 
if $x,y \leq g(x),g(y)$ for some $x,y \in \mathcal{J}$, then there exists $z \in \mathcal{J}$ 
such that  $x,y \leq z \leq g(x),g(y)$. Note that rough set systems defined by equivalences satisfy trivially this interpolation property, 
because each $x \in \mathcal{J}$ is comparable only with $g(x)$. Therefore, the condition  $x,y \leq g(x),g(y)$ is never true for $x \neq y$. 
Note also that we showed in \cite[Example 4.4]{JarRad11} that the height of $\mathcal{J}$ can be arbitrarily high. 

In the future it would be interesting to study what other kind of rough set structures can be characterized at the class of Kleene algebras defined on algebraic
lattices, by defining conditions on the set $\mathcal{J}$ of completely join-irreducible elements.

\section*{Acknowledgement} 
We would like to thank the anonymous referee who has put significant time and effort to provide expert views on our original manuscript.
In particular, pointing out an error in a proof and remarks on terminology are gratefully acknowledged. 


\providecommand{\bysame}{\leavevmode\hbox to3em{\hrulefill}\thinspace}
\providecommand{\MR}{\relax\ifhmode\unskip\space\fi MR }
\providecommand{\MRhref}[2]{%
  \href{http://www.ams.org/mathscinet-getitem?mr=#1}{#2}
}
\providecommand{\href}[2]{#2}

\end{document}